\theoremstyle{plain}
\newtheorem{theorem}{Theorem}
\newtheorem{lemma}[theorem]{Lemma}
\newtheorem{proposition}[theorem]{Proposition}
\theoremstyle{definition}
\newtheorem*{remark*}{Remark}
\newcommand{\pr}{\mathbf P}
\newcommand{\e}{\mathbf E}
\begin{document}
\title[Area under excursion]{Local asymptotics for the area under the random walk excursion}

\author[Perfilev]{Elena Perfilev} 
\address{Institut f\"ur Mathematik, Universit\"at Augsburg, 86135 Augsburg, Germany}
\email{Elena.Perfilev@math.uni-augsburg.de}

\author[Wachtel]{Vitali Wachtel} 
\address{Institut f\"ur Mathematik, Universit\"at Augsburg, 86135 Augsburg, Germany}
\email{vitali.wachtel@mathematik.uni-augsburg.de}

\begin{abstract}
We study tail behaviour of the distribution of the area under the positive excursion of a
random walk which has negative drift and light-tailed increments. We determine the asymptotics
for local probabilities for the area and prove a local central limit theorem for the duration of
the excursion conditioned on the large values of its area.
\end{abstract}
\keywords{Random walk, subexponential distribution,}
\subjclass{Primary 60G50; Secondary 60G40, 60F17} 
\maketitle
\section{Introduction and statement of results}
Let $\{S_n;\,n\ge1\}$ be a random walk with independent, identically distributed increments
$\{X_k;\,k\ge1\}$ and let $\tau$ be the first time when $S_n$ is non-positive, i.e.,
$$
\tau:=\min\{n\ge1:S_n\le0\}.
$$
Define also the area under the trajectory $\{S_1, S_2,\ldots,S_{\tau}\}$:
$$
A_\tau:=\sum_{k=1}^{\tau-1}S_k.
$$
If the increments of the random walk have non-positive mean then the random variables $\tau$ and
$A_\tau$ are finite and we are interested in the tail behaviour of the area $A_\tau$. 

In the case of the driftless ($\mathbf{E}X_1=0$) random walk with finite variance
$\sigma^2:=\mathbf{E}X_1^2\in(0,\infty)$ one has a universal tail behaviour
\begin{equation}
\label{zero-mean}
\lim_{x\to\infty}x^{1/3}\mathbf{P}(A_\tau>x)
=2C_0\sigma^{1/3}\mathbf{E}\left(\int_0^1 e(t)dt\right)^{1/3},
\end{equation}
where $e(t)$ denotes the standard Brownian excursion and the constant $C_0$ is taken from the
relation $\mathbf{P}(\tau=n)\sim C_0 n^{-3/2}$. Proposition 1 in Vysotsky~\cite{Vysotsky10}
states that \eqref{zero-mean} holds for some particular classes of random walks. But one can
easily see that the proof from \cite{Vysotsky10} remains valid for all oscillating random walks
with finite variance. Later we shall give an alternative proof of \eqref{zero-mean}.

If the mean of $X_1$ is negative then the distribution of $A_\tau$ becomes sensitive to the tail
behaviour of the increments. Borovkov, Boxma and Palmowski \cite{BBP03} have shown that if the tail
of $X_1$ is a regularly varying function then, as $x\to\infty$,
\begin{equation}
\label{reg-var}
\mathbf{P}(A_\tau>x)
\sim\mathbf{P}\left(M_\tau>\sqrt{2|\mathbf{E}X_1|}x^{1/2}\right)
\sim\mathbf{E}\tau\mathbf{P}\left(X_1>\sqrt{2|\mathbf{E}X_1|}x^{1/2}\right),
\end{equation}
where
$$
M_\tau:=\max_{n<\tau}S_n.
$$
Behind this relation stays a simple heuristic explanation. In order to have a large area under the 
excursion the random walk has to make a large jump at the very beginning and then the random walks
behaves according to the law of large numbers. More precisely, if the jump of size $h$ appears,
after which the random walk goes linearly down with the slope $-\mu$, where $\mu:=|\mathbf{E}X_1|$,
then the duration of the excursion will be of order $h/\mu$. Consequently, the area will be of order
$h^2/2\mu$. If we want the area be of order $x$ then the jump has to be of order $\sqrt{2\mu}x^{1/2}$.
The same strategy is optimal for large values of $M_\tau$. As a result, we have both asymptotic
equivalences in \eqref{reg-var}.

This close connection between the maximum $M_\tau$ and the area $A_\tau$ is not valid for random
walks with light tails. Let $\varphi(t)$ be the moment generating function of $X_1$, that is,
$$
\varphi(t):=\mathbf{E}e^{tX_1},\quad t\ge0.
$$
We shall consider random walks satisfying the Cramer condition:
\begin{equation}
\label{cramer.cond}
\varphi(\lambda)=1\text{ for some }\lambda>0.
\end{equation}
Moreover, we shall assume that
\begin{equation}
\label{moment.cond}
\varphi'(\lambda)<\infty\quad\text{and}\quad \varphi''(\lambda)<\infty.
\end{equation}
It is well-known that if \eqref{cramer.cond} and \eqref{moment.cond} hold then the most likely
path to a large value of $M_\tau$ is piecewise linear. The random walk goes first up with the
slope $\varphi'(\lambda)/\varphi(\lambda)$. After arrival at the desired level $h$, it goes
down with the slope $-\mu$. If this path were optimal for the area then one would have
$$
\mathbf{P}(A_\tau>x)\approx\mathbf{P}\left(M_\tau>
\sqrt{\frac{2\mu\varphi'(\lambda)}{\varphi'(\lambda)+\mu\varphi(\lambda)}}x^{1/2}\right).
$$
Since $\mathbf{P}(M_\tau>y)\sim Ce^{-\lambda y}$, one arrives at the contradiction to the known
results for random walks with two-sided exponentially distributed increments, see Guillemin and
Pinchon~\cite{GP98} and Kearney~\cite{Kearney04}. 

Duffy and Meyn \cite{DM14} have shown that the optimal path to a large area is a rescaling of
the function
\begin{equation}
\label{psi.def}
\psi(u):=-\frac{1}{\lambda}\log\varphi(\lambda(1-u)),\quad u\in[0,1].
\end{equation}
They have also shown that
\begin{equation}
\label{Duffy_Meyn}
\lim_{x\to\infty} \frac{1}{x^{1/2}}\log\mathbf{P}(A_\tau>x)=-\theta,
\end{equation}
where
$$
\theta:=2\lambda\sqrt{I}\quad\text{and}\quad I:=\int_0^1\psi(u)du.
$$
Our purpose is to derive precise, without logarithmic scaling, asymptotics for 
local probabilities $\mathbf{P}(A_\tau=x)$ for integer valued random walks.
\begin{theorem}
\label{T1}
Assume that $X_1$ is integer valued and aperiodic, that is,
$$
{\rm g.c.d.}\{k-n:\mathbf{P}(X_1=k)\mathbf{P}(X_1=n)>0\}=1.
$$
Assume also that \eqref{cramer.cond} and 
\eqref{moment.cond} hold. Then there exists a positive constant $\varkappa$ such that
\begin{equation}
\label{T1.1}
\mathbf{P}(A_\tau=x)\sim \varkappa x^{-3/4}e^{-\theta \sqrt{x}},\quad x\to\infty.
\end{equation}
\end{theorem}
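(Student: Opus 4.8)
The plan is to implement the heuristic path description of Duffy–Meyn quantitatively: condition on the duration $\tau=n$, write $\mathbf P(A_\tau=x)=\sum_n \mathbf P(A_\tau=x,\tau=n)$, and show that the sum concentrates on $n$ close to $n_* = c\sqrt x$ for an explicit constant $c$ (obtained by optimizing the large-deviation rate over the duration). The natural first move is an exponential change of measure. Since $\varphi(\lambda)=1$, the walk $S_n$ under $\mathbf P$ is already "balanced" in the sense that $e^{\lambda S_n}$ is a martingale up to $\tau$; this gives $\mathbf P(A_\tau=x,\tau=n)=\mathbf E_\lambda[e^{-\lambda S_n};A_\tau=x,\tau=n]$ under the tilted measure $\mathbf P_\lambda$ with increment law $e^{\lambda y}\mathbf P(X_1\in dy)$, which has positive mean $\varphi'(\lambda)>0$. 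Under $\mathbf P_\lambda$ the event $\{\tau=n\}$ forces the walk to stay positive for $n$ steps and then drop to $0$ or below; on that event $A_\tau=\sum_{k<n}S_k$ is typically of order $n^2$, so to get $A_\tau=x$ with $x$ large we need $n\asymp\sqrt x$, and $S_n$ itself is of order $\sqrt x$, which is exactly the source of the factor $e^{-\lambda S_n}=e^{-\Theta(\sqrt x)}$.

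The core of the argument is then a \emph{local} limit theorem for the pair $(A_\tau,\tau)$ under $\mathbf P_\lambda$ restricted to staying positive. I would proceed as follows. First, handle the conditioning $\{S_k>0,\ k<n\}$: by classical fluctuation theory for the tilted (positive-drift) walk, $\mathbf P_\lambda(\tau>n)$ decays geometrically, and more useful is the $h$-transform / Doob-transformed walk conditioned to stay positive forever, whose harmonic function $V$ is known to exist. Second, for the conditioned walk the vector $(S_1,\dots,S_n)$ suitably rescaled converges to a Brownian-type bridge/meander; the area $n^{-3/2}A_\tau$ and the terminal value $n^{-1/2}S_n$ converge jointly, and one upgrades this to a local CLT with a Gaussian-type density $g(\cdot,\cdot)$ on the $(x/n^{3/2}, S_n/n^{1/2})$ scale, using aperiodicity to get a genuine local (lattice) statement for $A_\tau=x$. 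Third, insert this into $\sum_n \mathbf E_\lambda[e^{-\lambda S_n}; A_\tau=x,\tau=n]$: the $n$-sum becomes, after rescaling $n=t\sqrt x$ and $S_n=\sqrt x\, s$, an integral $\int\!\int e^{-\lambda\sqrt x\, s}\,g_t(x^{-1}\cdot)\,(\text{local density})\,ds\,dt$, and a Laplace/saddle-point evaluation in $(t,s)$ picks out the minimizer, which by the Duffy–Meyn variational computation reproduces the rate $\theta=2\lambda\sqrt I$ in the exponent, the power $x^{-3/4}$ from the Gaussian fluctuations transverse to the saddle (one $x^{-1/4}$ from each of the two integration variables, combined with the $x^{-1/2}\cdot x^{3/4}$ bookkeeping of the local-CLT normalization), and a finite positive constant $\varkappa$ assembled from $V$, $\varphi''(\lambda)$, $C_0$-type constants and the Gaussian density at the saddle. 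The moment assumption \eqref{moment.cond} is what makes $\mathbf P_\lambda$ have finite variance and hence guarantees the CLT scaling.

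The main obstacle I anticipate is \emph{uniformity of the local limit theorem over the whole relevant range of $n$}, together with integrable tail bounds for $n$ far from $n_*=c\sqrt x$. For $n\ll\sqrt x$ the area is too small and the event $\{A_\tau=x\}$ is itself a large-deviation event for the conditioned walk; for $n\gg\sqrt x$ one pays $e^{-\lambda S_n}$ with $S_n$ atypically large or pays in the probability of staying positive that long, and in both regimes one needs quantitative exponential upper bounds (not just the logarithmic asymptotics \eqref{Duffy_Meyn}) that beat the contribution of the saddle — this is where the bulk of the technical work, likely via refined exponential Chebyshev bounds on $(A_\tau,\tau)$ under further tilting, will go. A secondary subtlety is making the local (lattice) CLT for $A_\tau$ — a quadratic-in-the-path functional — genuinely local rather than integral; this requires either a characteristic-function argument for the joint law of $(S_n, A_\tau)$ with careful control of the oscillatory integral near and away from the origin, or an appeal to an existing local CLT for additive functionals of random walks conditioned to stay positive. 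Once the saddle-point heuristic is made rigorous, matching the constant $\varkappa$ to the stated form and verifying its positivity is routine bookkeeping.
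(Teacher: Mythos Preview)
Your overall architecture (decompose over $n$, change measure, local CLT, saddle point in $n$) matches the paper, but the specific change of measure you propose does not work, and the heuristic you give for where the exponential factor comes from is incorrect.

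On the event $\{\tau=n\}$ one has $S_n\le 0$, so the weight $e^{-\lambda S_n}$ in your identity
\[
\mathbf P(A_\tau=x,\tau=n)=\mathbf E_\lambda\bigl[e^{-\lambda S_n};A_\tau=x,\tau=n\bigr]
\]
is $\ge 1$, not $e^{-\Theta(\sqrt x)}$. The exponential decay must therefore come entirely from $\mathbf P_\lambda(A_\tau=x,\tau=n)$. But under the homogeneous tilt $\mathbf P_\lambda$ the increments are i.i.d.\ with \emph{constant} positive mean $\varphi'(\lambda)$, so the typical path is a straight line with slope $\varphi'(\lambda)$; whereas the Duffy--Meyn optimal profile $\psi$ is strictly concave with $\psi(0)=\psi(1)=0$. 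Thus, even at the optimal duration $n\approx\sqrt{x/I}$, the event $\{A_{n-1}=x,\ S_{n-1}\text{ small},\ \tau>n-1\}$ remains a genuine large-deviation event under $\mathbf P_\lambda$, and the ``local CLT for the conditioned walk'' you invoke does not hold around the relevant path. (Relatedly, your claim that $\mathbf P_\lambda(\tau>n)$ decays geometrically is wrong: it tends to a positive constant since the tilted walk has positive drift.) To extract precise asymptotics from your formula you would still need a full Bahadur--Rao type precise large-deviation estimate for the pair $(S_{n-1},A_{n-1})$ constrained to stay positive, which is exactly the hard part.

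The paper resolves this by using a \emph{time-inhomogeneous} tilt: at step $j$ one tilts by $u_{n,j}=\lambda(n-j+1)/n$, which is the tilt of $A_n$ by the single parameter $\lambda/n$. Under the resulting measure $\widehat{\mathbf P}$ the mean path is precisely $n\psi(\cdot/n)$, so $\{A_n=x,\ S_n=O(1),\ \tau>n\}$ with $n\approx\sqrt{x/I}$ becomes a \emph{typical} event and a bona fide two-dimensional local CLT for $(S_n,A_n)$ applies (first unconditionally, then with the positivity constraint handled near both endpoints via a splitting at time $m=\log^2 n$ and time reversal). The change-of-measure identity
\[
\mathbf P(A_n=x,\tau=n+1)=e^{-\lambda x/n}\prod_{j=1}^n\varphi(u_{n,j})\,\widehat{\mathbf P}(A_n=x,\tau=n+1),
\]
together with $\prod_j\varphi(u_{n,j})=e^{-\lambda I n}(1+O(n^{-1}))$, displays the exponential cost $e^{-\lambda x/n-\lambda I n}$ explicitly; minimizing over $n$ gives $e^{-\theta\sqrt x}$, and the Gaussian local CLT under $\widehat{\mathbf P}$ plus a Laplace sum over $n$ produces the prefactor $\varkappa x^{-3/4}$. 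The missing idea in your proposal is exactly this inhomogeneous tilt tailored to the functional $A_n$ rather than to $S_n$.
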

It is easy to see that \eqref{T1.1} implies that
\begin{equation}
\label{T1.2}
\mathbf{P}(A_\tau>x)\sim \frac{2\varkappa}{\theta}x^{-1/4}e^{-\theta \sqrt{x}}.
\end{equation}
An analogon of this relation has been obtained by Guillemin and Pinchon \cite{GP98} for an $M/M/1$
queue and by Kearney \cite{Kearney04} for a $Geo/Geo/1$ queue.

Relation \eqref{T1.2} confirms the conjecture in Kulik and Palmowski \cite{KP11} for all integer
valued random walks. Unfortunately, we do not know how to derive a version of \eqref{T1.1} for
non-lattice random walks. Moreover, we do not know how to derive \eqref{T1.2} without local asymptotics.
One can derive an upper bound for $\mathbf{P}(A_\tau>x)$ via the exponential Chebyshev inequality.
This leads to
\begin{equation}
\label{Cheb.ineq}
\mathbf{P}(A_\tau>x)\le Cx^{1/4}e^{-\theta\sqrt{x}}.
\end{equation}
For the proof of this estimate see Subsection~\ref{subsec:Cheb}. Comparing \eqref{T1.2} and
\eqref{Cheb.ineq}, we see that the Chebyshev inequality gives the right logarithmic rate of diveregence
and that the error in \eqref{Cheb.ineq} is of order $\sqrt{x}$. Such an error is quite standard for the
exponential Chebyshev inequality. In the most classical situation of sums of i.i.d. random variables
one has an error of order $\sqrt{n}$. In order to avoid this error and to obtain \eqref{T1.1} we apply
an appropriate exponential change of measure and analyse, under tarnsformed measure, the asymptotic
behavior of local probabilities for $S_n$ and $A_n:=\sum_{k=1}^nS_k$ conditioned on the event 
$\{\tau=n+1\}$. This approach allows one to obtain the following conditional limit for the duration of
the excursion.
\begin{theorem}
\label{T3}
Under the assumptions of Theorem \ref{T1}, there exists $\Delta^2>0$ such that
$$
\sup_{k}\left|x^{1/4}\mathbf{P}(\tau=k|A_\tau=x)-\frac{1}{\sqrt{2\pi\Delta^2}}
\exp\left\{-\frac{(k-I^{-1/2}x^{1/2})^2}{2\Delta^2 x^{1/2}}\right\}\right|\to0,\quad x\to\infty.
$$
\end{theorem}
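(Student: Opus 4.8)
The plan is to derive Theorem~\ref{T3} from the analysis behind Theorem~\ref{T1}. On $\{\tau=k\}$ one has $\{A_\tau=x\}=\{A_{k-1}=x\}$, so
\[
\mathbf P(\tau=k\mid A_\tau=x)=\frac{\mathbf P(\tau=k,\,A_{k-1}=x)}{\mathbf P(A_\tau=x)},
\]
and the denominator is supplied by \eqref{T1.1}. The crucial point is that the proof of Theorem~\ref{T1} yields more than the asymptotics of the sum $\mathbf P(A_\tau=x)=\sum_{n\ge1}\mathbf P(\tau=n+1,A_n=x)$: after the exponential change of measure and the local analysis of $(S_n,A_n)$ on $\{\tau=n+1\}$ outlined in the introduction, it delivers the joint local asymptotics of the individual terms. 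Concretely, I would extract an expansion
\[
\mathbf P(\tau=n+1,\,A_n=x)=\frac{c(nx^{-1/2})+o(1)}{x}\,
\exp\bigl\{-\sqrt x\,J(nx^{-1/2})\bigr\},
\]
uniform for $nx^{-1/2}$ in compact subsets of $(0,\infty)$, where $c$ is continuous and positive and $J\in C^2$ is strictly convex near its unique minimiser $t_*=I^{-1/2}$, with $J(t_*)=\theta$ and $J''(t_*)=:\Delta^{-2}\in(0,\infty)$. Heuristically the dominant trajectory over $[0,n]$ is $k\mapsto n\psi(k/n)$ with $\psi$ as in \eqref{psi.def}: its area is $n^2I$, forcing $n\approx I^{-1/2}\sqrt x$, and its exponential cost is $2\lambda In\approx\theta\sqrt x$, consistently with \eqref{Duffy_Meyn}; finiteness of $J''(t_*)$ reflects $\varphi''(\lambda)<\infty$ and its strict positivity the non-degeneracy of $X_1$.

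Granting this, I would first recover \eqref{T1.1} and identify $\varkappa$ by a lattice Laplace argument. The sum over $n$ concentrates on $|n-t_*\sqrt x|=O(x^{1/4})$; writing $n=t_*\sqrt x+s$ and using $J(nx^{-1/2})=\theta+\tfrac12\Delta^{-2}s^2x^{-1}+o(x^{-1/2})$ gives
\[
\mathbf P(A_\tau=x)\sim\frac{c(t_*)}{x}e^{-\theta\sqrt x}\sum_{s\in\mathbb Z}
e^{-s^2/(2\Delta^2\sqrt x)}\sim\frac{c(t_*)\sqrt{2\pi\Delta^2}}{x^{3/4}}e^{-\theta\sqrt x},
\]
so $\varkappa=c(t_*)\sqrt{2\pi\Delta^2}$. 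Dividing the expansion of the numerator by \eqref{T1.1} and expanding $J$ to second order then yields, for every fixed $L$ and uniformly over $|k-I^{-1/2}x^{1/2}|\le Lx^{1/4}$ (with $k=n+1$),
\[
x^{1/4}\mathbf P(\tau=k\mid A_\tau=x)=\frac{1}{\sqrt{2\pi\Delta^2}}
\exp\Bigl\{-\frac{(k-I^{-1/2}x^{1/2})^2}{2\Delta^2x^{1/2}}\Bigr\}+o(1),
\]
the continuity of $c$ absorbing $c(nx^{-1/2})/c(t_*)\to1$ and the shift $k=n+1$ contributing only $o(1)$.

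It remains to control $k$ outside this window; a standard truncation then gives $\sup_k|\cdot|\to0$. For $|k-I^{-1/2}x^{1/2}|>Lx^{1/4}$ with $kx^{-1/2}$ in a small fixed neighbourhood of $t_*$, strict convexity gives $J(nx^{-1/2})-\theta\ge c_0(nx^{-1/2}-t_*)^2$, so the expansion and \eqref{T1.1} give $x^{1/4}\mathbf P(\tau=k\mid A_\tau=x)\le Ce^{-c_0L^2}$; for $kx^{-1/2}$ in a compact subset of $(0,\infty)$ bounded away from $t_*$ one has $J(nx^{-1/2})\ge\theta+\eta$ for some $\eta>0$, hence $x^{1/4}\mathbf P(\tau=k\mid A_\tau=x)\le Ce^{-\eta\sqrt x}$; for $k\le\varepsilon\sqrt x$ the event $\{A_{k-1}=x\}$ forces $M_\tau\ge x/(k-1)\ge\varepsilon^{-1}\sqrt x$, so $\mathbf P(\tau=k,A_{k-1}=x)\le Ce^{-\lambda\varepsilon^{-1}\sqrt x}$, which beats $e^{-\theta\sqrt x}$ once $\varepsilon<\lambda/\theta$; and for $k\ge M\sqrt x$ the elementary bound $\mathbf P(\tau\ge k)\le\varphi(t_0)^{k-1}$, valid for small $t_0>0$ since $\varphi'(0)=\mathbf EX_1<0$, gives $\mathbf P(\tau=k,A_{k-1}=x)\le e^{-\beta M\sqrt x}$ with $\beta>0$, which beats $e^{-\theta\sqrt x}$ once $M>\theta/\beta$. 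In each of these regimes the Gaussian on the right is likewise $o(1)$ uniformly (at most $Ce^{-L^2/(2\Delta^2)}$ in the first, $o(1)$ in the others), so there the difference of the two sides is at most $Ce^{-c_0L^2}+Ce^{-L^2/(2\Delta^2)}+o(1)$; letting $x\to\infty$ and then $L\to\infty$ gives Theorem~\ref{T3}.

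The whole difficulty is concentrated in the first step: establishing the two-dimensional local limit theorem for $(S_n,A_n)$ under the transformed measure conditioned on $\{\tau=n+1\}$, with error terms uniform in $n$ over a window of width $O(x^{1/4})$ around $t_*\sqrt x$, together with the upper bound $\mathbf P(\tau=n+1,A_n=x)\le Cx^{-1}e^{-\sqrt x J(nx^{-1/2})}$ valid for all $n$. This is exactly the content of the analysis behind Theorem~\ref{T1}; once it is available with the $n$-dependence tracked explicitly, Theorem~\ref{T3} is Laplace-type bookkeeping, the only genuinely new point being the identification of the limiting variance $\Delta^2$ as the reciprocal curvature $1/J''(t_*)$ of the rate function at its minimum.
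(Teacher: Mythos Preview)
Your overall strategy is the paper's: express the conditional probability as a ratio, use Theorem~\ref{T1} for the denominator, and for the numerator use the termwise expansion established in the proof of Theorem~\ref{T1} (the relation labelled \eqref{C} there), then carry out a Laplace-type analysis around $n\approx I^{-1/2}\sqrt x$. Your tail estimates are also essentially those the paper uses.

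The gap is in your ansatz and hence in your identification of $\Delta^2$. What the analysis behind Theorem~\ref{T1} actually yields is
\[
\mathbf P(A_n=x,\tau=n+1)=\frac{Q+o(1)}{n^{2}}\,
\exp\Bigl\{-\frac{\lambda x}{n}-\lambda In\Bigr\}\,
f_1\!\Bigl(0,\frac{x-n^{2}I}{n^{3/2}}\Bigr),
\]
with $f_1(0,z)=c\exp\{-z^{2}/(2V)\}$ and $V=\int_0^1\sigma^2(u)(1-u)^2\,du$. The last factor is \emph{not} a function of $t=nx^{-1/2}$ alone: its argument equals $x^{1/4}(1-t^{2}I)/t^{3/2}$, which diverges for every fixed $t\neq t_*$. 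Consequently, if your $J$ is the ``exponential cost'' $J_0(t)=\lambda/t+\lambda It$ suggested by your heuristic paragraph, then the prefactor $c(t)$ in your ansatz would have to vanish off $t_*$, contradicting your continuity and positivity claim; and $J_0''(t_*)=2\lambda I^{3/2}$ gives the \emph{wrong} variance. In the paper, writing $n=n_++k$, the conditional law of $k$ is governed by the \emph{product} of the two Gaussians $e^{-\lambda Ik^{2}/n_+}$ and $f_1(0,-2Ik/n_+^{1/2})$, so that
\[
\frac{1}{2\Delta^{2}}=\lambda I^{3/2}+\frac{2I^{5/2}}{V};
\]
there is a second contribution, coming from the conditional local CLT under $\widehat{\mathbf P}$, on top of the curvature of the exponential rate. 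Your remark that ``finiteness of $J''(t_*)$ reflects $\varphi''(\lambda)<\infty$'' seems to sense this---note that $J_0''$ involves no second moments at all---but you have not made it precise. One can rescue your formulation by absorbing $f_1$ into the exponent, setting $J(t)=J_0(t)+(1-t^{2}I)^{2}/(2Vt^{3})$ and $c(t)=cQ/t^{2}$, after which indeed $\Delta^{-2}=J''(t_*)$; but this repackaging is exactly the substance that has to be worked out, and without it your identification of $\Delta^{2}$ as ``the reciprocal curvature $1/J''(t_*)$ of the rate function'' is incorrect.
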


\section{Non-homogeneous exponential change of measure.}
\label{sec:change_of_measure}
Our approach to the derivation of the tail asymptotics for $A_\tau$ is based on a careful analysis of
large deviation probabilities for the vector $(A_n,S_n)$ conditioned on $\{\tau=n+1\}$. For every fixed
$n$ we shall perform the following non-homogeneous change of measure. Consider a new probability measure
$\widehat{\mathbf{P}}$ such that the increments $X_1,X_2,\ldots, X_n$ are still independent and, for
every $1\le j\le n$,
\begin{equation}
\label{change.1}
\widehat{\mathbf{P}}(X_j\in dy)=
\frac{e^{u_{n,j}y}}{\varphi(u_{n,j})}\mathbf{P}(X_j\in dy),
\end{equation}
where
$$
u_{n,j}=\lambda\frac{(n-j+1)}{n}.
$$
This non-homogeneous choice of transformation parameters $u_{n,j}$ can be easily explained by the fact
that it corresponds to the exponential change of the distribution of $A_n$ with parameter $\lambda/n$.
Indeed,
$$
\mathbf{E}e^{\frac{\lambda}{n}A_n}=\mathbf{E}e^{\frac{\lambda}{n}\sum_1^n(n-j+1)X_j}
=\prod_{j=1}^n\varphi\left(\frac{n-j+1}{n}\lambda\right).
$$

We have also the following relation between probabilities $\widehat{\mathbf{P}}$ and $\mathbf{P}$:
\begin{equation}
\label{change.2}
\mathbf{P}(A_n\in dx, S_n\in dy)=e^{-\lambda x/n}\prod_{j=1}^n\varphi(u_{n,j})
\widehat{\mathbf{P}}(A_n\in dx, S_n\in dy)
\end{equation}
and
\begin{align}
\label{change.3}
\nonumber
&\pr(A_n\in dx, S_n\in dy,\tau>n)\\
&\hspace{1cm}=e^{-\lambda x/n}\prod_{j=1}^n\varphi(u_{n,j})\widehat{\pr}(A_n\in dx, S_n\in dy,\tau>n).
\end{align}
\subsection{Simple properties of the change of measure}
In this paragraph we shall collect some elementary properties of the measure defined in \eqref{change.1}.
We first note that, by the definition of $\widehat{\mathbf{P}}$,
$$
\widehat{\mathbf{E}}X_j=\frac{\varphi'(u_{n,j})}{\varphi(u_{n,j})},\quad j=1,2,\ldots,n.
$$
This implies that if $j/n\to t\in[0,1]$ then
$$
\widehat{\mathbf{E}}X_j\to\frac{\varphi'(\lambda(1-t))}{\varphi(\lambda(1-t))}\quad\text{and}\quad
\frac{1}{n}\widehat{\mathbf{E}}S_j\to\int_0^t\frac{\varphi'(\lambda(1-u))}{\varphi(\lambda(1-u))}du
=\psi(t).
$$
More precisely, there exists a constant $C$ such that, for all $j=1,2,\ldots,n$,
\begin{equation}
\label{euler.1}
\Big|\widehat{\mathbf{E}}S_j-n\psi\left(\frac{j}{n}\right)\Big|\leq C.
\end{equation}
This is statement is a standard error estimate for the Riemannian sum approximation of integrals
of a function with bounded derivative. Furthermore,
$$
\widehat{\mathbf{Var}}X_j=\frac{\varphi''(u_{n,j})}{\varphi(u_{n,j})}
-\left(\frac{\varphi'(u_{n,j})}{\varphi(u_{n,j})}\right)^2
$$
and, consequently,
$$
\frac{1}{n}\widehat{\mathbf{Var}}S_j\to
\int_0^t\left(\frac{\varphi''(\lambda(1-u))}{\varphi(\lambda(1-u))}
-\left(\frac{\varphi'(\lambda(1-u))}{\varphi(\lambda(1-u))}\right)^2\right)du. 
$$
From these asymptotics for the first two moments and from the Kolmogorov inequality we infer that
\begin{align*}
\sup_{t\in[0,1]}\left|\frac{S_{[nt]}}{n}-\psi(t)\right|\to0,\quad\text{ in }\widehat{\mathbf{P}}-\text{probability}.
\end{align*}
Fix some $\gamma\in(0,1/2)$. It is obvious that $\widehat{\mathbf{E}}X_j^3$ are uniformly bounded
for $j\in[\gamma n, (1-\gamma)n]$. This implies that the sequence $\{X_j\}_{j=1}^n$ satisfies the 
Lindeberg condition. Therefore, we have the following version of the functional central limit theorem:
the sequence of linear interplations
$$
s_n(t)=n^{-1/2}\left(S_k+X_{k}(tn-k-1)-n\psi(t)\right)\quad
\text{for }t\in\left[\frac{k}{n},\frac{k+1}{n}\right], k=0,1,\ldots,n-1
$$
converges weakly on $C[0,1]$ towards a centered gaussian process $\{\xi(t);t\in[0,1]\}$ with independent
increments and second moments
$$
\mathbf{E}(\xi(t))^2=
\int_0^t\sigma^2(u) du,
$$
where
$$
\sigma^2(u):=
\frac{\varphi''(\lambda(1-u))}{\varphi(\lambda(1-u))}
-\left(\frac{\varphi'(\lambda(1-u))}{\varphi(\lambda(1-u))}\right)^2.
$$

This functional convergence implies that
$$
\left(\frac{S_{[nt]}-n\psi(t)}{\sqrt{n}},\frac{A_{[nt]}-n^2\int_0^t\psi(s)ds}{n^{3/2}}\right)
\Rightarrow\left(\xi(t),\int_0^t\xi(s)ds\right),\quad t\in[0,1].
$$
The limiting vector has a normal distribution with zero mean. We now compute the covariance of $\xi(t)$
and $\int_0^t \xi(s)ds$. Using the independence of the increments, one can easily get
\begin{align*}
\mathbf{Cov}\left(\xi(t),\int_0^t\xi(s)ds\right)&=\int_0^t\mathbf{Cov}\left(\xi(t),\xi(s)\right)ds\\
&=\int_0^t\mathbf{Cov}\left(\xi(s)+\xi(t)-\xi(s),\xi(s)\right)ds\\
&=\int_0^t\mathbf{Cov}\left(\xi(s),\xi(s)\right)ds\\
&=\int_0^t\int_0^s\sigma^2(u)du\hspace{0,1cm}ds=\int_0^t\sigma^2(u)(t-u)du.
\end{align*}
Moreover,
\begin{align*}
\mathbf{Cov}\left(\int_0^t\xi(s)ds,\int_0^t\xi(s)ds\right)
&=\int_0^t\int_0^t \mathbf{Cov}\left(\xi(s_1),\xi(s_2)\right)ds_1 ds_2\\
&=2\int_0^tds_1\int_0^{s_1}\mathbf{Cov}\left(\xi(s_1),\xi(s_2)\right)ds_2\\
&=2\int_0^tds_1\int_0^{s_1}\left(\int_0^{s_2}\sigma^2(u)du\right)ds_2\\
&=2\int_0^t\int_0^{s_1}\sigma^2(u)(s_1-u)ds_1\hspace{0,1cm}du\\
&=\int_0^t\sigma^2(u)(t-u)^2 du.
\end{align*}
Therefore, the density of $\left(\xi(t),\int_0^t\xi(s)ds\right)$ is given by
\begin{align}
\label{t-density}
f_t(x,y):=\frac{1}{2\pi\sqrt{\det\Sigma_t}}\exp\left(-\frac{1}{2}(x,y)\Sigma_t^{-1}(x,y)^T\right).
\end{align}
with the covariance matrix
\begin{equation}
\label{t-matrix}
\Sigma_t=\begin{pmatrix}
&\int_0^t\sigma^2(u)du&\int_0^t\sigma^2(u)(t-u)du\\
&\int_0^t\sigma^2(u)(t-u)du&\int_0^t\sigma^2(u)(t-u)^2du
\end{pmatrix}.
\end{equation}

\subsection{Proof of the Chebyshev-type estimate \eqref{Cheb.ineq}}
\label{subsec:Cheb}
\begin{lemma}
\label{lem:euler}
As $n\to\infty$,
\begin{equation}
\label{euler.2}
\prod_{j=1}^n\varphi(u_{n,j})=\exp\left\{-\lambda I n\right\}(1+O(n^{-1}))
\end{equation}
\end{lemma}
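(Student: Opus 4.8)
The plan is to pass to logarithms, rewrite the resulting sum through the function $\psi$ from \eqref{psi.def}, and recognise it as a left Riemann sum whose first-order Euler--Maclaurin correction happens to vanish. First I would note that, since $u_{n,j}=\lambda\bigl(1-\tfrac{j-1}{n}\bigr)$ and \eqref{psi.def} is equivalent to $\log\varphi(\lambda(1-u))=-\lambda\psi(u)$, one has $\log\varphi(u_{n,j})=-\lambda\psi\bigl(\tfrac{j-1}{n}\bigr)$ for every $j$, so that
\[
\log\prod_{j=1}^n\varphi(u_{n,j})=\sum_{j=1}^n\log\varphi(u_{n,j})=-\lambda\sum_{k=0}^{n-1}\psi\!\left(\tfrac{k}{n}\right).
\]
Since $e^{O(n^{-1})}=1+O(n^{-1})$, the claim \eqref{euler.2} reduces to the estimate $\sum_{k=0}^{n-1}\psi(k/n)=nI+O(n^{-1})$.

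The main step is this Riemann-sum estimate. By \eqref{moment.cond} the function $\varphi$ and its first two derivatives are finite and continuous on $[0,\lambda]$, and since $\varphi>0$ this regularity passes to $u\mapsto-\tfrac1\lambda\log\varphi(\lambda(1-u))$; thus $\psi\in C^2[0,1]$, which is the same smoothness of $\psi$ already used for \eqref{euler.1} and in the definition of $\sigma^2(\cdot)$. For such $\psi$ a second-order Taylor expansion on each interval $[k/n,(k+1)/n]$ gives $\int_{k/n}^{(k+1)/n}\psi(u)\,du=\tfrac1n\psi(k/n)+\tfrac1{2n^2}\psi'(k/n)+O(n^{-3})$ uniformly in $k$; summing over $k=0,1,\dots,n-1$ and using $\tfrac1n\sum_{k=0}^{n-1}\psi'(k/n)=\int_0^1\psi'(u)\,du+O(n^{-1})=\psi(1)-\psi(0)+O(n^{-1})$ yields the first-order Euler--Maclaurin identity
\[
\sum_{k=0}^{n-1}\psi\!\left(\tfrac{k}{n}\right)=n\int_0^1\psi(u)\,du+\frac{\psi(0)-\psi(1)}{2}+O(n^{-1}).
\]
Finally, \eqref{cramer.cond} gives $\varphi(\lambda)=1$, hence $\psi(0)=-\tfrac1\lambda\log\varphi(\lambda)=0$, while $\varphi(0)=\mathbf{E}e^{0\cdot X_1}=1$ gives $\psi(1)=-\tfrac1\lambda\log\varphi(0)=0$. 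The boundary term therefore disappears, $\sum_{k=0}^{n-1}\psi(k/n)=nI+O(n^{-1})$, and substituting this into the first display gives $\log\prod_{j=1}^n\varphi(u_{n,j})=-\lambda I n+O(n^{-1})$; exponentiating yields \eqref{euler.2}.

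I do not expect a genuine obstacle here: the computation is routine once one notices that $\psi$ vanishes at both endpoints of $[0,1]$. That observation is nonetheless the crux — without it one only gets the crude $\prod_{j=1}^n\varphi(u_{n,j})=e^{-\lambda I n}(1+O(1))$, whereas the sharp factor $1+O(n^{-1})$, which is what will later control the error terms in the asymptotic analysis under $\widehat{\mathbf{P}}$, is exactly the gain obtained from $\psi(0)=\psi(1)=0$.
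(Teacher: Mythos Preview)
Your proof is correct and follows essentially the same line as the paper's: both reduce to $\sum_{k=0}^{n-1}\psi(k/n)=nI+O(n^{-1})$, both apply Euler--Maclaurin, and both hinge on the observation $\psi(0)=\psi(1)=0$ that kills the first-order boundary term. The only cosmetic difference is that the paper quotes the Euler--Maclaurin formula with Bernoulli polynomials from Gel'fond and then bounds the $\psi''$-remainder, whereas you derive the same identity by a direct Taylor expansion on each subinterval; the content is identical.
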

\begin{proof}
It is obvious that \eqref{euler.2} is equivalent to
\begin{align}
\label{euler.22}
\sum_{j=1}^n\log\varphi(u_{n,j})=-\lambda I n+O(n^{-1}).
\end{align}
The sum on the left hand side of \eqref{euler.22} can be written as follows:
\begin{align}
\label{euler.2a}
\nonumber
\sum_{j=1}^n\log\varphi\left(\lambda\frac{n-j+1}{n}\right)
&=\sum_{j=1}^n\log\varphi\left(\lambda\left(1-\frac{j-1}{n}\right)\right)\\
\nonumber
&=-\lambda\sum_{j=1}^n\left(-\frac{1}{\lambda}\log\varphi\left(\lambda\left(1-\frac{j-1}{n}\right)\right)\right)\\
&=-\lambda\sum_{j=1}^n\psi\left(\frac{j-1}{n}\right)=-\lambda\sum_{j=0}^{n-1}\psi_n(j),
\end{align}
where $\psi_n(z):=\psi\left(\frac{z}{n}\right)$.

Applying the Euler-Mclaurin summation formula (see Gel'fond \cite{Gelfond}, p.281, formula (66)),  we obtain
\begin{align}
\label{euler.3}
\nonumber
\sum_{j=0}^{n-1}\psi_n(j)
&=\int_0^n\psi_n(t)dt+B_1\left(\psi_n(n)-\psi_n(0)\right)\\
&\hspace{1cm}-\frac{1}{2}\int_0^1\left(B_2(t)-B_2\right)\sum_{j=0}^{n-1}\psi_n''(j+1-t)dt,
\end{align}
where $B_k$  and $B_k(t)$ are Bernoulli numbers and Bernoulli polynomials respectively.

Noting that $\psi_n(n)=\psi(1)=0=\psi(0)=\psi_n(0)$, we conclude that the first correction
term in \eqref{euler.3} disappears. Furthermore, by the definition of $\psi_n$,
\begin{equation*}
\int_0^n\psi_n(t)dt=\int_0^n\psi\left(\frac{t}{n}\right)dt=n\int^1_0\psi(t)dt=nI.
\end{equation*}
Consequently, the equality \eqref{euler.3} reduces to
\begin{align}
\label{euler.4}
\sum_{j=0}^{n-1}\psi_n(j)
=nI-\frac{1}{2}\int_0^1(B_2(t)-B_2)\sum_{j=0}^{n-1}\psi_n''(j+1-t)dt.
\end{align}
Since $\varphi(z)$, $\varphi'(z)$ and $\varphi''(z)$ are bounded on the interval $[0,\lambda]$,
we get
\begin{align*}
\sup_{z\in[0,n]}\vert\psi_n''(z)\vert
=\frac{1}{n^2}\sup_{z\in[0,1]}\vert\psi''(z)\vert
=\frac{\lambda}{n^2}\sup_{z\in[0,\lambda]}\bigg\vert\frac{\varphi''(z)\varphi(z)-(\varphi'(z))^2}{\varphi^2(z)}\bigg\vert
=\frac{c}{n^2}.
\end{align*}
Therefore,
\begin{align*}
\bigg\vert\int_0^1(B_2(t)-B_2)\sum_{i=0}^{n-1}\psi_n''(j+1-t)dt\bigg\vert
\leq\frac{c}{n}\int_0^1|B_2(t)-B_2|dt=O\left(\frac{1}{n}\right).
\end{align*}
Combining this estimate with \eqref{euler.4}, we obtain
\begin{align}\label{euler.5}
\sum_{j=0}^{n-1}\psi_n(j)=nI+O\left(\frac{1}{n}\right).
\end{align}
Taking into account \eqref{euler.2a} we conclude that \eqref{euler.22} is valid.
Thus, the proof of the lemma is complete.
\end{proof}

Using \eqref{euler.2} we can derive the upper bound \eqref{Cheb.ineq} for $\mathbf{P}(A_\tau>x)$.
Obviously,
\begin{align*}
\mathbf{P}(A_{\tau}\geq x)=\sum_{n=0}^\infty\mathbf{P}(A_n\geq x,\tau=n+1).
\end{align*}
Using the exponential Chebyschev inequality and recalling that 
$$
A_n=\sum_{i=1}^n(n-j+1)X_j,
$$
we obtain
\begin{align*}
\mathbf{P}(A_n\geq x,\tau=n+1)&\leq \mathbf{P}(A_n\geq x)\\
&\leq e^{-\frac{\lambda}{n}x}\mathbf{E} e^{\frac{\lambda}{n}A_n}
=e^{-\frac{\lambda}{n}x}\prod_{i=1}^n\mathbf{E} e^{\lambda\frac{n-j+1}{n}X_j}\\
&=e^{-\frac{\lambda}{n}x}\prod_{i=1}^n\varphi\left(\lambda\frac{n-j+1}{n}\right).
\end{align*}
Applying Lemma 4, we get
\begin{align}
\label{euler.6}
\nonumber
\mathbf{P}(A_n\geq x, \tau=n+1)
&\leq\exp\left\lbrace-\lambda\frac{x}{n}-\lambda I n+O(n^{-1})\right\rbrace\\
&\le C\exp\left\lbrace-\lambda\frac{x}{n}-\lambda I n\right\rbrace.
\end{align}
Consequently,
\begin{align}\label{zerbrinsum}
\mathbf{P}(A_{\tau}\geq x)
=\sum_{n=1}^\infty\mathbf{P}(A_n\geq x,\tau=n+1)
\leq C\sum_{n=1}^\infty\exp\left\lbrace-\lambda\frac{x}{n}-\lambda I n\right\rbrace.
\end{align}
The function $t\rightarrow\lambda\frac{x}{t}+\lambda I t$ achieves its minimum at
\begin{equation}\label{mint0}
t_0=\sqrt{\frac{x}{I}}.
\end{equation}
Define: $n_{-}=\lfloor t_0\rfloor=\max\left\lbrace n\in\mathbb{N}:n\leq t_0\right\rbrace$, $n_{+}=n_{-}+1$
and split the series on the right hand side of \eqref{zerbrinsum} into two parts:
\begin{align}
\label{sum.split}
\sum_{n=1}^\infty\exp\left\lbrace-\lambda\frac{x}{n}-\lambda I n\right\rbrace
=\sum_{n=1}^{n_-}\exp\left\lbrace-\lambda\frac{x}{n}-\lambda I n\right\rbrace
+\sum_{n=n_+}^\infty\exp\left\lbrace-\lambda\frac{x}{n}-\lambda I n\right\rbrace.
\end{align}
For the second sum we have
\begin{align}
\label{secsumm}
\nonumber
&\sum_{n=n_{+}}^\infty \exp\left\lbrace-\frac{\lambda x}{n}-\lambda I n\right\rbrace\\
\nonumber
&\hspace{2cm}=\sum_{k=0}^\infty\exp\left\lbrace-\frac{\lambda x}{n_++k}-\lambda I (n_++k)\right\rbrace\\
\nonumber
&\hspace{2cm}=\exp\left\lbrace-\frac{\lambda x}{n_+}-\lambda I n_+\right\rbrace
\sum_{k=0}^\infty\exp\left\lbrace-k\lambda I+\lambda x\left(\frac{1}{n_+}-\frac{1}{n_++k}\right)\right\rbrace\\ 
\nonumber
&\hspace{2cm}=\exp\left\lbrace-\frac{\lambda x}{n_+}-\lambda I n_+\right\rbrace
\sum_{k=0}^\infty\exp\left\lbrace-k\lambda I +\frac{\lambda x k}{n_+^2(1+k/n_+)}\right\rbrace\\
\nonumber
&\hspace{2cm}\leq\exp\left\lbrace-\frac{\lambda x}{n_+}-\lambda I n_+\right\rbrace
\sum_{k=0}^\infty\exp\left\lbrace-k\lambda I+\frac{\lambda x k}{t_0^2(1+k/n_+)}\right\rbrace\\
&\hspace{2cm}=\exp\left\lbrace-\frac{\lambda x}{n_+}-\lambda I n_+\right\rbrace
\sum_{k=0}^\infty\exp\left\lbrace-k\lambda I+\frac{\lambda Ik}{(1+k/n_+)}\right\rbrace.
\end{align}
For every $k\leq n_+$ we have
\begin{equation}\label{secsumm.1}
\begin{split}
-k\lambda I+\frac{\lambda kI}{(1+k/n_+)}=-\frac{k^2\lambda I}{n_+(1+k/n_+)}\leq - \frac{k^2\lambda I}{2 n_+}.
\end{split}
\end{equation}
Therefore,
\begin{align}
\label{kfrom0tonplus}
\nonumber
\sum_{k=0}^{n_+}e^{-k\lambda I+\frac{k\lambda I}{1+k/n_+}}
&\leq 1+\sum_{k=1}^{n_+}e^{-\frac{k^2\lambda I}{2n_+}}\leq 1+\sum_{k=1}^{n_+}\int_{k-1}^k e^{-\frac{\lambda z^2 I}{2n_+}} dz\\
&\leq 1+\int_0^\infty e^{-z^2\frac{\lambda I}{2n_+}} dz=1+\sqrt{\frac{\pi}{2}}\sqrt{\frac{n_+}{\lambda I}}.
\end{align}
Furthermore, for every $k>n_+$ one has
\begin{align*}
-\frac{k^2\lambda I}{n_+(1+k/n_+)}\leq -\frac{k\lambda I}{2}.
\end{align*}
This implies that
\begin{align}
\label{kfromnplustoinfty}
\sum_{k=n_++1}^\infty e^{-k\lambda I+\frac{k\lambda I}{1+k/n_+}}
\leq \sum_{k=n_++1}^\infty e^{-\frac{k\lambda I}{2}}\leq \frac{e^{-n_+\lambda I/2}}{1-e^{-\lambda I/2}}.
\end{align}
Plugging (\ref{kfrom0tonplus}) and (\ref{kfromnplustoinfty}) into (\ref{secsumm})
and recalling the definition of $t_0$, we obtain
\begin{align}
\label{nfromnplustoinfty}
\nonumber
&\sum_{n=n_+}^\infty\exp\left\lbrace-\lambda\frac{x}{n}-n\lambda I\right\rbrace\\
\nonumber
&\hspace{1cm}\leq \exp\left\lbrace-\frac{\lambda x}{n_+}-n_+\lambda I\right\rbrace
\left( 1+\sqrt{\frac{\pi}{2}}\sqrt{\frac{n_+}{\lambda I}}+\frac{e^{-n_+\lambda I/2}}{1-e^{-\lambda I/2}}\right)\\
&\hspace{1cm}\leq C x^{1/4}\exp\left\lbrace-\frac{\lambda x}{t_0}-t_0\lambda I\right\rbrace
=C x^{1/4}\exp\left\lbrace-2\lambda\sqrt{I x}\right\rbrace.
\end{align}

We now turn to the first sum on the right hand side of \eqref{sum.split}.
Changing the summation index, we get
\begin{align}\label{nfrom1tonminus}
\nonumber
&\sum_{n=1}^{n_-}\exp\left\lbrace-\frac{\lambda}{n} x-\lambda I n\right\rbrace\\
\nonumber
&\hspace{1cm}=\sum_{k=0}^{n_--1}\exp\left\lbrace-\frac{\lambda x}{n_--k}-\lambda I (n_--k)\right\rbrace\\
&\hspace{1cm}=\exp\left\lbrace-\frac{\lambda x}{n_-}-\lambda I n_-\right\rbrace
\sum_{k=0}^{n_--1}\exp\left\lbrace k\lambda I-\frac{\lambda x k}{n_-^2(1-k/n_-)}\right\rbrace.
\end{align}
It follows from the definition of $n_-$ that $n_-^2\leq t_0^2=\frac{x}{I}$. In other words, $x\ge n_-^2 I$. 
Therefore,
\begin{align}
\label{nfrom1tonminus1}
\nonumber
&\sum_{k=0}^{n_--1}\exp\left\lbrace k\lambda I-\frac{\lambda x k}{n_-^2(1-k/n_-)}\right\rbrace\\
\nonumber
&\hspace{1cm}\leq\sum_{k=0}^{n_--1}\exp\left\lbrace-\frac{k^2\lambda I}{n_-(1-k/n_-)}\right\rbrace
\leq\sum_{k=0}^{n_--1}\exp\left\lbrace-\frac{k^2\lambda I}{n_-}\right\rbrace\\
&\hspace{1cm}\leq 1+\sum_{k=1}^{n_--1}\int_{k-1}^k \exp\left\lbrace-\frac{z^2\lambda I}{n_-}\right\rbrace dz
\leq 1+\sqrt{\frac{\pi}{2}}\sqrt{\frac{n_-}{\lambda I}}\leq C x^{1/4}.
\end{align}
Combining \eqref{nfrom1tonminus} and \eqref{nfrom1tonminus1}, we obtain
\begin{align*}
\sum_{n=1}^{n_-}\exp\left\lbrace-\frac{\lambda}{n} x-\lambda I n\right\rbrace
&\leq Cx^{1/4}\exp\left\lbrace-\frac{\lambda x}{t_0}-\lambda I t_0\right\rbrace\\
&= Cx^{1/4}\exp\left\lbrace-2\lambda\sqrt{I x}\right\rbrace.
\end{align*}
Plugging this estimate and \eqref{nfromnplustoinfty} into \eqref{sum.split}, we get
\begin{align*}
\sum_{n=1}^\infty\exp\left\lbrace-\lambda\frac{x}{n}-\lambda I n\right\rbrace
\leq C x^{1/4}\exp\left\lbrace-2\lambda\sqrt{I x}\right\rbrace.
\end{align*}
From this bound and \eqref{zerbrinsum} we obtain \eqref{Cheb.ineq}.
\section{Local limit theorems}
We start by proving a standard (unconditioned) Gnedenko local limit theorem for the two-dimenisional
vector $(S_{[nt]},A_{[nt]})$ under the measure $\widehat\pr$. The following statement is a
one-dimensional case of Theorem 4.2 in Dobrushin and Hryniv \cite{DH96} and we give its proof for
completeness reasons only.
\begin{proposition}\label{P.5}
Assume that the conditions of Theorem~\ref{T1} are valid. Then, for every $t\in(0,1]$,
\begin{align*}
\sup_{x,y}\Bigg\vert n^2\widehat\pr(S_{[nt]}=x,A_{[nt]}=y)-
f_t\left(\frac{x-n\psi(t)}{\sqrt{n}},\frac{y-n^2\int_0^t\psi(s)ds}{n^{3/2}}\right)\Bigg\vert
\longrightarrow 0,
\end{align*}
where $f_t$ is defined in \eqref{t-density} and \eqref{t-matrix}. 
\end{proposition}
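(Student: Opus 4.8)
The plan is to prove Proposition~\ref{P.5} by a Fourier-inversion argument. Put $m=[nt]$. Since the increments are integer valued, under $\widehat{\mathbf{P}}$ the vector $(S_m,A_m)$ takes values in $\mathbb Z^2$ and, writing $A_m=\sum_{k=1}^mS_k=\sum_{j=1}^m(m-j+1)X_j$, its characteristic function factorises,
\[
\phi_{m,n}(\theta,\rho):=\widehat{\mathbf{E}}e^{i\theta S_m+i\rho A_m}
=\prod_{j=1}^m\widehat\varphi_j\big(\theta+(m-j+1)\rho\big),\qquad
\widehat\varphi_j(s):=\frac{\varphi(u_{n,j}+is)}{\varphi(u_{n,j})}.
\]
By the inversion formula for $\mathbb Z^2$-valued vectors and the substitution $\theta=\theta'/\sqrt n$, $\rho=\rho'/n^{3/2}$,
\[
n^2\widehat{\mathbf{P}}(S_m=x,A_m=y)
=\frac1{(2\pi)^2}\int\limits_{|\theta'|\le\pi\sqrt n,\ |\rho'|\le\pi n^{3/2}}
e^{-i\theta'\frac{x}{\sqrt n}-i\rho'\frac{y}{n^{3/2}}}\,
\phi_{m,n}\!\Big(\tfrac{\theta'}{\sqrt n},\tfrac{\rho'}{n^{3/2}}\Big)\,d\theta'\,d\rho'.
\]
Writing $e^{-i\theta'x/\sqrt n}=e^{-i\theta'u}e^{-i\theta'\sqrt n\psi(t)}$ with $u=\frac{x-n\psi(t)}{\sqrt n}$, and $e^{-i\rho'y/n^{3/2}}=e^{-i\rho'v}e^{-i\rho'\sqrt n\int_0^t\psi}$ with $v=\frac{y-n^2\int_0^t\psi}{n^{3/2}}$, the right-hand side becomes $\frac1{(2\pi)^2}\int_{|\theta'|\le\pi\sqrt n,\,|\rho'|\le\pi n^{3/2}}e^{-i\theta'u-i\rho'v}\,g_n(\theta',\rho')\,d\theta'd\rho'$, where $g_n(\theta',\rho'):=e^{-i\theta'\sqrt n\psi(t)-i\rho'\sqrt n\int_0^t\psi}\phi_{m,n}(\theta'/\sqrt n,\rho'/n^{3/2})$ is the characteristic function of the recentred, rescaled vector $\big(\tfrac{S_m-n\psi(t)}{\sqrt n},\tfrac{A_m-n^2\int_0^t\psi}{n^{3/2}}\big)$. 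Since $f_t(u,v)=\frac1{(2\pi)^2}\int_{\mathbb R^2}e^{-i\theta'u-i\rho'v-\frac12(\theta',\rho')\Sigma_t(\theta',\rho')^{T}}\,d\theta'd\rho'$, it suffices to prove that this integral over $|\theta'|\le\pi\sqrt n,\,|\rho'|\le\pi n^{3/2}$ converges to $f_t(u,v)$ uniformly in $x,y$. To do this I would split the region into a fixed compact square $Q_R=\{|\theta'|,|\rho'|\le R\}$, an intermediate part $Q_{R,\varepsilon}=\{\max(|\theta'|,|\rho'|)>R\}\cap\{|\theta|\le\varepsilon,\ m|\rho|\le\varepsilon\}$, and the complement $F=[-\pi,\pi]^2\setminus\{|\theta|\le\varepsilon,\ m|\rho|\le\varepsilon\}$ (the latter two described in the original variables $(\theta,\rho)$), and estimate the three contributions in turn, letting $n\to\infty$ first and $R\to\infty$ afterwards.

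On $Q_R$ the integrand is bounded by $1$ in modulus and, by the weak convergence $\big(\tfrac{S_{[nt]}-n\psi(t)}{\sqrt n},\tfrac{A_{[nt]}-n^2\int_0^t\psi}{n^{3/2}}\big)\Rightarrow(\xi(t),\int_0^t\xi(s)\,ds)$ established in Section~\ref{sec:change_of_measure}, one has $g_n(\theta',\rho')\to\exp\{-\frac12(\theta',\rho')\Sigma_t(\theta',\rho')^{T}\}$ for every fixed $(\theta',\rho')$. Dominated convergence then yields $\int_{Q_R}e^{-i\theta'u-i\rho'v}g_n\,d\theta'd\rho'\to\int_{Q_R}e^{-i\theta'u-i\rho'v-\frac12(\theta',\rho')\Sigma_t(\theta',\rho')^T}\,d\theta'd\rho'$ uniformly in $(u,v)$, and the latter converges to $(2\pi)^2f_t(u,v)$ as $R\to\infty$, uniformly in $(u,v)$.

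The contributions of $Q_{R,\varepsilon}$ and $F$ must be shown negligible, uniformly in $x,y$. The crucial ingredient is a uniform single-step estimate: for the \emph{bulk} indices $\gamma n\le j\le m$ (say $\gamma=t/2$) the tilt parameter $u_{n,j}$ stays in a compact subinterval of $[0,\lambda)$, so the tilted laws of $X_j$ have exponential moments in a fixed neighbourhood of $0$, uniformly in $j$, while $\widehat{\mathbf{Var}}X_j=\sigma^2\big(\tfrac{j-1}{n}\big)\ge c_0>0$ by continuity and strict positivity of $\sigma^2$ on $[0,1]$; combining the resulting quadratic estimate $|\widehat\varphi_j(s)|\le e^{-cs^2}$ for $|s|\le\varepsilon_0$ with the uniform aperiodicity bound $\sup_j\sup_{\mathrm{dist}(s,2\pi\mathbb Z)\ge\delta}|\widehat\varphi_j(s)|\le1-\eta_\delta<1$ (a consequence of the aperiodicity of $X_1$ and the uniform positivity of the atoms of the tilted laws) one gets a constant $c>0$ with
\[
|\phi_{m,n}(\theta,\rho)|\le\exp\Big\{-c\sum_{j=\gamma n}^{m}\mathrm{dist}\big(\theta+(m-j+1)\rho,\,2\pi\mathbb Z\big)^2\Big\}
\qquad\text{for all }(\theta,\rho)\in[-\pi,\pi]^2.
\]
On $Q_{R,\varepsilon}$, with $\varepsilon$ small enough, these distances equal $|\theta+(m-j+1)\rho|$, so the exponent is a multiple of the positive definite quadratic form $\sum_{i=1}^{m-\gamma n+1}(\theta+i\rho)^2\asymp m\theta^2+m^2\theta\rho+m^3\rho^2$, which upon rescaling dominates a fixed positive definite form in $(\theta',\rho')$; hence $\int_{Q_{R,\varepsilon}}|g_n|\,d\theta'd\rho'$ is bounded by $\int_{\mathbb R^2\setminus Q_R}e^{-c_1|(\theta',\rho')|^2}\,d\theta'd\rho'$, which tends to $0$ as $R\to\infty$ uniformly in $n$. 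On $F$, where $|\theta|>\varepsilon$ or $m|\rho|>\varepsilon$, a standard counting argument for the arithmetic progression $\{\theta+(m-j+1)\rho\}_{j=\gamma n}^{m}$ (as in \cite{DH96}) shows that, for $\delta$ small enough, a positive proportion of its terms lie at distance $\ge\delta$ from $2\pi\mathbb Z$, so that $|\phi_{m,n}(\theta,\rho)|\le(1-\eta_\delta)^{c_2 n}$ and therefore $n^2\int_F|\phi_{m,n}|\,d\theta\,d\rho\le\pi^2n^2(1-\eta_\delta)^{c_2 n}\to0$. Collecting the three bounds and letting $R\to\infty$ completes the proof.

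I expect the main obstacle to be this uniform single-step estimate together with the integral bound over $F$: one must handle simultaneously the non-homogeneity of the change of measure — the increments with $j=o(n)$ are tilted essentially up to $\lambda$, where only two moments are assumed finite, so the quadratic bound has to rest on variances and be applied only to bulk indices — and the slow rotation of the phase $\theta+(m-j+1)\rho$ in the \emph{area} direction $\rho$, which is exactly what makes the linear-weight structure of $A_m$, rather than mere aperiodicity of $S_m$, indispensable in the counting argument on $F$. The remaining ingredients — the reduction on $Q_R$ to the already-established weak convergence, and the Gaussian tail estimate on $Q_{R,\varepsilon}$ — are routine.
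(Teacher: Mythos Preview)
Your proposal is correct and follows essentially the same Fourier-inversion strategy as the paper: both split the inverse-Fourier integral into a compact region (handled by the already-established weak convergence), an intermediate region (quadratic bound on the single-step characteristic functions), and a far region (aperiodicity combined with the observation that a positive proportion of the arithmetic progression $\{\theta+(m-j+1)\rho\}_j$ stays away from $2\pi\mathbb Z$). Your treatment is in fact more scrupulous than the paper's on one point: you restrict the quadratic bound to bulk indices $\gamma n\le j\le m$, where the tilt stays in a compact subinterval of $[0,\lambda)$ and exponential moments are available, whereas the paper invokes uniform integrability of $X_j^2$ over all $j$ (which is true under the assumption $\varphi''(\lambda)<\infty$, but needs a word of justification); either route works.
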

\begin{proof}
Consider centered random variables
$$
X_j^0:=X_j-\widehat{\mathbf{E}}X_j
$$
and their characteristic functions
$$
\varphi_j(v):=\widehat{\mathbf{E}}e^{ivX_j^0},\quad 1\le j\le n.
$$
By the inversion formula,
\begin{align*}
&\widehat\pr (S_{[nt]}=x, A_{[nt]}=y)\\
&\hspace{1cm}=\frac{1}{(2\pi)^2}
\int_{-\pi}^\pi\int_{-\pi}^\pi e^{-iv_1x-iv_2y}\widehat{\mathbf{E}}e^{iv_1S_{[nt]}+iv_2A_{[nt]}}dv_1dv_2\\
&\hspace{1cm}=\frac{1}{(2\pi)^2}
\int_{-\pi}^\pi\int_{-\pi}^\pi e^{-iv_1n^{1/2}x_0-iv_2n^{3/2}y_0}\prod_{j=1}^n\varphi_j(v_1+(n-j+1)v_2)dv_1dv_2,
\end{align*}
where
$$
x_0:=\frac{x-\widehat{\mathbf{E}}S_{[nt]}}{n^{1/2}}
\quad\text{and}\quad
y_0:=\frac{y-\widehat{\mathbf{E}}A_{[nt]}}{n^{3/2}}.
$$
Using the change of variables $v_1\rightarrow\sqrt{n}v_1$, $v_2\rightarrow n^{3/2}v_2$, we get
\begin{align}
\label{loc.1}
\nonumber
&n^2\widehat\pr (S_{[nt]}=x, A_{[nt]}=y)\\
&\hspace{1cm}=\int_{-\pi n^{1/2}}^{\pi n^{1/2}}\int_{-\pi n^{3/2}}^{\pi n^{3/2}}e^{-iv_1x_0-iv_2y_0}
\prod_{j=1}^n\varphi_j\left(\frac{v_1}{n^{1/2}}+\frac{(n-j+1)v_2}{n^{3/2}}\right)dv_1dv_2.
\end{align}
By the same arguments,
\begin{align}
\label{loc.2}
\nonumber
&f_t\left(\frac{x-\widehat{\mathbf{E}}S_{[nt]}}{n^{1/2}},\frac{x-\widehat{\mathbf{E}}A_{[nt]}}{n^{3/2}}\right)\\
&\hspace{1cm}=\frac{1}{(2\pi)^2}\int_{-\infty}^\infty\int_{-\infty}^\infty e^{-iv_1x_0-iv_2y_0}
e^{-(v_1,v_2)\Sigma_t(v_1,v_2)^{T}}dv_1dv_2.
\end{align}
Combining \eqref{loc.1} and \eqref{loc.2}, we conclude that
\begin{align*}
&\sup_{x,y}\Bigg\vert n^2\widehat{\pr}(S_{[nt]}=x, A_{[nt]}=y)
-f_t\left(\frac{x-\e S_{[nt]}}{\sqrt{n}},\frac{y-\e A_{[nt]}}{n^{3/2}}\right)\Bigg\vert\\
&\hspace{1cm}\leq I_1+I_2+I_3+I_4,
\end{align*}
where 
\begin{align*}
I_1&=\frac{1}{(2\pi)^2}\int_{-A}^A\int_{-B}^B\bigg\vert\prod_{j=1}^n\varphi_j\left(\frac{v_1}{n^{1/2}}+\frac{(n-j+1)v_2}{n^{3/2}}\right)
-e^{(v_1,v_2)\Sigma_t(v_1,v_2)^{T}}\bigg\vert dv_1dv_2,\\
I_2&=\frac{1}{(2\pi)^2}\int_{A<\vert v_1\vert\leq\varepsilon\sqrt{n}}\int_{B<\vert v_2\vert\leq\varepsilon n^{1/2}}
\prod_{j=1}^n\left|\varphi_j\left(\frac{v_1}{n^{1/2}}+\frac{(n-j+1)v_2}{n^{3/2}}\right)\right| dv_1dv_2,\\
I_3&=\frac{1}{(2\pi)^2}\int_{\frac{|v_1|}{n^{1/2}}\in(\varepsilon,\pi]\text{ or }\frac{|v_1|}{n^{1/2}}\in(\varepsilon,\pi n]}
\prod_{j=1}^n\left|\varphi_j\left(\frac{v_1}{n^{1/2}}+\frac{(n-j+1)v_2}{n^{3/2}}\right)\right| dv_1dv_2,\\
I_4&=\frac{1}{(2\pi)^2}\int_{\vert v_1\vert>A}\int_{\vert v_2\vert>B}e^{-(v_1,v_2)\Sigma_t(v_1,v_2)^{T}}dv_1dv_2.
\end{align*}

Choosing $A$ and $B$ large enough, we can make the integral $I_4$ as small as we please. Furthermore, the weak 
convergence
$$
\left(\frac{S_{[nt]}-\widehat{\mathbf{E}}S_{[nt]}}{\sqrt{n}},\frac{A_{[nt]}-\widehat{\mathbf{E}}A_{[nt]}}{n^{3/2}}\right)
\Rightarrow\left(\xi(t),\int_0^t\xi(s)ds\right)
$$
implies that, uniformly on every compact $[-A,A]\times[-B,B]$,
$$
\bigg\vert\prod_{j=1}^n\varphi_j\left(\frac{v_1}{n^{1/2}}+\frac{(n-j+1)v_2}{n^{3/2}}\right)
-e^{(v_1,v_2)\Sigma_t(v_1,v_2)^{T}}\bigg\vert\to0.
$$
Consequently, $I_1$ converges to zero.

It is clear that the random variables $X_j^2$ are uniformly integrable with respect to
the measure $\widehat\pr$. Therefore, for every $\varepsilon$ small enough,
$$
|\varphi_j(v)|\le e^{-\sigma_j^2v^2/4},\quad |v|\le 2\varepsilon,\ 1\le j\le n.
$$
Consequently, there exist constants $c>0$ and $C$ such that
\begin{align*}
\prod_{j=1}^n\left|\varphi_j\left(\frac{v_1}{n^{1/2}}+\frac{v_2}{n^{3/2}}\right)\right|
&\leq\exp\left\lbrace-\sum_{j=1}^n\frac{\sigma_j^2}{4}\left(\frac{v_1}{\sqrt{n}}+\frac{(n-j+1)v_2}{n^{3/2}}\right)^2\right\rbrace\\
&\leq C\exp\left\lbrace-c(v_1,v_2)\Sigma_t(v_1,v_2)^{T}\right\rbrace
\end{align*}
on the set $|v_1|\le \varepsilon n^{1/2}$, $|v_2|\le\varepsilon n^{1/2}$.
Therefore, $I_2$ can be made as small as we please by choosing $A$ and $B$ large enough.\\

It remains to bound $I_3$. Since the distributions of random variables $X_j$ are
aperiodic, $\vert\widehat\e[e^{ivX_j}]\vert=1$ if and only if $v=2\pi m$. Furthermore,
recalling that the distributions of $X_j$ are obtained via the exponentail change of measure
of the same distribution and that the parameters of these changes are taken from the bounded
interval, we conclude that for every $\delta>0$ there exists $c_\delta>0$ such that
\begin{equation}
\label{loc.3}
\max_{1\le j\le n}|\varphi_j(v)|\le e^{-c_\delta}\quad\text{for all }v\text{ such that }|v-2\pi m|>\delta
\text{ for all }m\in\mathbb{Z}.
\end{equation}
For all $v_1,v_2$ from the integration region in $I_3$, we have the following property.
At least $\frac{n}{2}$ elements of the sequence
$\left\lbrace\frac{v_1}{\sqrt{n}}+\frac{(n-j+1)v_2}{n^{3/2}}\right\rbrace_{j=1}^n$
are separated from the set $\lbrace2\pi m,m\in\mathbb{Z}\rbrace$. From this fact and
\eqref{loc.3} we infer that there exists $\delta_0$ such that
$$
\prod_{j=1}^n\left|\varphi_j\left(\frac{v_1}{n^{1/2}}+\frac{v_2}{n^{3/2}}\right)\right|
\le e^{-c_{\delta_0}n/2}.
$$
Consequently, $I_3$ converges to zero as $n\to\infty$. Thus, the proof is complete.
\end{proof}

\begin{proposition}\label{T6}
Assume that the conditions of Theorem~\ref{T1} are valid. Then there exists a positive, increasing
function $q(a)$ such that, for every $t\in(0,1)$ and every $a\geq0$,
\begin{align}\label{T6.1}
\nonumber
\sup_{x,y}&\Bigg|n^2\widehat{\pr}\left(S_{[nt]}=x, A_{[nt]}=y,\min_{k\leq [nt]}S_k>-a\right)\\
&\quad -q(a)f_t\left(\frac{x-n\psi(t)}{\sqrt{n}},\frac{y-n^2\int_0^t\psi(s)ds}{n^{3/2}}\right)\Bigg|\to0.
\end{align}
\end{proposition}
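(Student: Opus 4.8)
I would first identify the constant. Let $\widetilde{\mathbf{P}}$ denote the law of the homogeneous random walk $\widetilde S_k=\widetilde X_1+\dots+\widetilde X_k$ whose increments are i.i.d.\ with $\widetilde{\mathbf{P}}(\widetilde X=j)=e^{\lambda j}\mathbf{P}(X_1=j)$ (a probability measure since $\varphi(\lambda)=1$); this walk has positive drift $\widetilde{\mathbf{E}}\widetilde X=\varphi'(\lambda)>0$, so
$$
q(a):=\widetilde{\mathbf{P}}\bigl(\widetilde S_k>-a\ \text{for all }k\ge1\bigr)
$$
is positive and non-decreasing in $a$, and I claim this is the function in \eqref{T6.1}. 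Write $m=[nt]$. The plan is to strip off a \emph{fixed} number $\ell$ of initial steps. Since $\{\min_{k\le m}S_k>-a\}=\{\min_{k\le\ell}S_k>-a\}\cap\{\min_{\ell<k\le m}S_k>-a\}$, one has for all $x,y$ the elementary sandwich
\begin{multline*}
0\le\widehat{\mathbf{P}}\bigl(S_m=x,A_m=y,\min_{k\le\ell}S_k>-a\bigr)-\widehat{\mathbf{P}}\bigl(S_m=x,A_m=y,\min_{k\le m}S_k>-a\bigr)\\
\le\widehat{\mathbf{P}}\bigl(S_m=x,A_m=y,\min_{\ell<k\le m}S_k\le-a\bigr),
\end{multline*}
so it suffices to prove, uniformly in $x,y$: \textbf{(A)} for each fixed $\ell$, $n^2\widehat{\mathbf{P}}(S_m=x,A_m=y,\min_{k\le\ell}S_k>-a)\to\widetilde{\mathbf{P}}(\min_{k\le\ell}\widetilde S_k>-a)\,f_t\bigl(\frac{x-n\psi(t)}{\sqrt n},\frac{y-n^2\int_0^t\psi(s)ds}{n^{3/2}}\bigr)$ as $n\to\infty$, while $\widetilde{\mathbf{P}}(\min_{k\le\ell}\widetilde S_k>-a)\downarrow q(a)$ as $\ell\to\infty$; and \textbf{(B)} $\limsup_{\ell\to\infty}\limsup_{n\to\infty}\sup_{x,y}n^2\widehat{\mathbf{P}}(S_m=x,A_m=y,\min_{\ell<k\le m}S_k\le-a)=0$. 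Letting $n\to\infty$ and afterwards $\ell\to\infty$ in the sandwich then yields \eqref{T6.1}.

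For (A) I would condition on $(S_\ell,A_\ell)$. The continuation $(S_m-S_\ell,\ A_m-A_\ell-(m-\ell)S_\ell)$ depends only on $X_{\ell+1},\dots,X_m$, hence is independent of the first $\ell$ increments and is a two-dimensional walk of exactly the kind handled by Proposition~\ref{P.5} (discarding $\ell$ fixed increments, and translating the Fourier integrals by $O(|S_\ell|+\ell)$ in the first coordinate and $O(|A_\ell|+m|S_\ell|)$ in the second, leaves the Riemann-sum and central-limit limits unchanged). Thus Proposition~\ref{P.5}, applied to this shifted walk, gives $\widehat{\mathbf{P}}(S_m=x,A_m=y\mid S_\ell=z,A_\ell=w)=n^{-2}(f_t(\cdots)+o(1))$ uniformly in $x,y$ and in $(z,w)$ over any fixed compact set, together with the uniform bound $\le C/n^2$ for all $z,w$ furnished by the Fourier estimates in that proof. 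Multiplying by $\widehat{\mathbf{P}}(S_\ell=z,A_\ell=w,\min_{k\le\ell}S_k>-a)$ and summing over $z>-a$, $w$, I would truncate $(z,w)$ at a fixed level $R$: the tail is $\le(C/n^2)\widehat{\mathbf{P}}(|S_\ell|\vee|A_\ell|>R)$, while on the truncated part the local limit theorem applies and, since $u_{n,j}=\lambda(n-j+1)/n\to\lambda$ for $j\le\ell$, the finite-dimensional law of $(X_1,\dots,X_\ell)$ converges, so $\widehat{\mathbf{P}}(S_\ell=z,A_\ell=w,\min_{k\le\ell}S_k>-a)\to\widetilde{\mathbf{P}}(\widetilde S_\ell=z,\widetilde A_\ell=w,\min_{k\le\ell}\widetilde S_k>-a)$. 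Sending $n\to\infty$, then $R\to\infty$, then $\ell\to\infty$ gives (A); the monotone convergence $\widetilde{\mathbf{P}}(\min_{k\le\ell}\widetilde S_k>-a)\downarrow q(a)$ is immediate.

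For (B), put $\sigma:=\min\{k>\ell:S_k\le-a\}$, so $\{\min_{\ell<k\le m}S_k\le-a\}=\{\sigma\le m\}$, and split the range of $\sigma$ into $(\ell,2\ell]$, $(2\ell,\varepsilon n]$ and $(\varepsilon n,m]$ for a fixed small $\varepsilon>0$. On $\{\sigma\le2\ell\}$, the strong Markov property at $\sigma$ and the uniform local bound $\sup_{x,y}\widehat{\mathbf{P}}(S_m=x,A_m=y\mid\mathcal F_\sigma)\le C/(m-\sigma)^2\le C'/n^2$ (valid since $\sigma\le2\ell\ll m$) bound this term by $(C'/n^2)\widehat{\mathbf{P}}(\ell<\sigma\le2\ell)$, and $\widehat{\mathbf{P}}(\ell<\sigma\le2\ell)$ (depending only on $X_1,\dots,X_{2\ell}$) converges to $\widetilde{\mathbf{P}}(\ell<\widetilde\sigma\le2\ell)\le\widetilde{\mathbf{P}}(\inf_{k>\ell}\widetilde S_k\le-a)\to0$ as $\ell\to\infty$. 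On $\{2\ell<\sigma\le\varepsilon n\}$ the same Markov bound reduces matters to estimating $\sum_{2\ell<k\le\varepsilon n}\widehat{\mathbf{P}}(S_k\le-a)$: by \eqref{euler.1}, $\widehat{\mathbf{E}}S_k=n\psi(k/n)+O(1)\ge\frac{c_0}{2}k$ for $\ell$ large (using $\psi(0)=0$, $\psi'(0)=\varphi'(\lambda)>0$, so $\psi(s)\ge c_0 s$ near $0$), and for $k\le\varepsilon n$ all $u_{n,j}$ with $j\le k$ stay $\ge\lambda(1-\varepsilon)$, so a second-order Taylor expansion of $s\mapsto\log\varphi(u_{n,j}-s)-\log\varphi(u_{n,j})$ (legitimate as $\varphi,\varphi',\varphi''$ are bounded and $\varphi$ bounded away from $0$ on $[0,\lambda]$) gives, for a fixed small $s>0$, $\log\widehat{\mathbf{E}}e^{-sS_k}=-s\widehat{\mathbf{E}}S_k+O(s^2k)\le-\frac{c_0 s}{4}k$, whence $\widehat{\mathbf{P}}(S_k\le-a)\le e^{sa}e^{-c_0 sk/4}$ and the sum is $O(e^{-c_0 s\ell/2})$. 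On $\{\varepsilon n<\sigma\le m\}$ I would use the crude bound $\widehat{\mathbf{P}}(S_m=x,A_m=y,\varepsilon n<\sigma\le m)\le\sum_{\varepsilon n<k\le m}\widehat{\mathbf{P}}(S_k\le-a)$; there $k/n\in(\varepsilon,t]\subset(0,1)$, so $\widehat{\mathbf{E}}S_k\ge c_1 n$ (as $\psi>0$ on $(0,1)$) while $j\le k\le[nt]$ keeps $u_{n,j}\ge\lambda(1-t)>0$, and the same Taylor bound gives $\widehat{\mathbf{P}}(S_k\le-a)\le e^{sa}e^{-c_1 sn/2}$, so this range is $\le n e^{sa}e^{-c_1 sn/2}=o(n^{-2})$. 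Adding the three ranges proves (B).

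I expect step (B) to be the main obstacle, specifically the uniform exponential control of the ``return to $-a$'' event in the two bulk ranges: the change of measure \eqref{change.1} is non-homogeneous and its parameters $u_{n,j}$ degenerate to $0$ as $j\to n$, which is why the exponential-Chebyshev parameter and the threshold $\varepsilon$ must be chosen with care. The hypothesis $t<1$ rescues the argument, since then only the parameters $u_{n,j}$ with $j\le[nt]$ enter, and these stay bounded away from $0$, and $\psi$ stays bounded away from $0$ on the relevant time windows. Step (A) and the sandwich are routine once Proposition~\ref{P.5} is available.
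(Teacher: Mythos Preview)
Your argument is correct and follows the same overall strategy as the paper's proof: strip off an initial stretch of the walk, apply the unconditioned local limit theorem (Proposition~\ref{P.5}) to the remaining bulk, control the probability that the bulk ever returns below $-a$ by an exponential Chebyshev bound, and identify $q(a)$ as the survival probability of the homogeneous $\lambda$-tilted walk with drift $\varphi'(\lambda)>0$.

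The organizational difference is that you work with a \emph{fixed} cutoff $\ell$ and a sandwich, taking the double limit $n\to\infty$ then $\ell\to\infty$, whereas the paper takes a growing cutoff $m=[\log^2 n]$ and a single limit. Your choice has the advantage that, with $\ell$ fixed, the law of $(X_1,\dots,X_\ell)$ under $\widehat{\mathbf P}$ converges outright to that of the homogeneous walk, so the identification of $q(a)$ is immediate and you avoid the auxiliary concentration estimates $\widehat{\mathbf P}(|S_m-\widehat{\mathbf E}S_m|\ge\log^{3/2}n)=o(1)$, $\widehat{\mathbf P}(|A_m-\widehat{\mathbf E}A_m|\ge\log^{5/2}n)=o(1)$ and the set $D$ that the paper needs to handle its growing window. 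The paper's version, in turn, dispatches the ``return'' probability in one stroke (since after $m=[\log^2 n]$ steps $S_m$ is already of order $\log^2 n$), while you must split the first return time $\sigma$ into the three ranges $(\ell,2\ell]$, $(2\ell,\varepsilon n]$, $(\varepsilon n,[nt]]$; your Taylor-expansion form of the exponential Chebyshev bound is a clean substitute for the paper's Euler--Maclaurin computation of $\widehat{\mathbf E}e^{-\lambda h S_k}$. Both routes rely on $t<1$ in exactly the way you note, to keep the tilting parameters $u_{n,j}$ bounded away from $0$.
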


\begin{proof}
Set $m=[\log^2n]$. Then, by the Markov property at time $m$,
\begin{align}
\label{T6.2}
\nonumber
&\widehat{\pr}(S_{[nt]}=x, A_{[nt]}=y,\min_{k\leq n}S_k>-a)\\
&\hspace{1cm}=\sum_{x',y'>0}\widehat{\pr}(S_m=x', A_m=y',\min_{k\leq m}S_k>-a)Q(x'y';x,y),
\end{align}
where 
\begin{align*}
&Q(x'y';x,y)\\
&\hspace{0.1cm}=\widehat{\pr}\left(S^{(m)}_{[nt]-m}=x-x', A^{(m)}_{[nt]-m}=y-y'-(n-m)x',\min_{k\leq [nt]-m}S_k^{(m)}>-x'-a\right)
\end{align*}
and
$$
S_k^{(m)}=X_{m+1}+\cdots X_{m+k}\quad\text{and}\quad A^{(m)}_k=S_1^{(m)}+S_2^{(m)}+\ldots+S_k^{(m)}.
$$

By Proposition~\ref{P.5},
\begin{align}
\label{T6.2aa}
Q(x'y';x,y) \le\widehat{\pr}\left(S^{(m)}_{[nt]-m}=x-x', A^{(m)}_{[nt]-m}=y-y'-(n-m)x'\right)
\le\frac{c_t}{n^2}.
\end{align}

It follows from the definition of $\widehat{\pr}$ that the second moments of $X_j$ are uniformly bounded.
Applying the Chebyshev inequality, we then obtain
\begin{align}
\label{T6.4a}
\widehat{\pr}(|S_m-\widehat{\mathbf{E}}S_m|\geq \log^{3/2}n)=o(1)
\end{align}
and
\begin{align}
\label{T6.4b}
\widehat{\pr}(|A_m-\widehat{\mathbf{E}}A_m|\geq \log^{5/2}n)=o(1).
\end{align}
Define
$$
D:=\left\{(x',y'):\, |x'-\widehat{\mathbf{E}}S_m|\leq \log^{3/2}n,
|y'-\widehat{\mathbf{E}}A_m|\leq \log^{5/2}n\right\}.
$$
Combining \eqref{T6.2aa}, \eqref{T6.4a} and \eqref{T6.4b}, we concude that,
uniformly in $x,y>0$,
\begin{equation}
\label{T6.2bb}
\lim_{n\to\infty} n^2\sum_{D^c}\widehat{\pr}(S_m=x', A_m=y',\min_{k\leq m}S_k>-a)Q(x'y';x,y)=0.
\end{equation}

We turn now to the asymptotic behaviour of $Q(x'y';x,y)$ for $(x',y')$ belonging to the set $D$.
Obviously,
\begin{align}
\label{T6.2a}
&Q(x'y';x,y)=\widehat{\pr}\left(S^{(m)}_{[nt]-m}=x-x', A^{(m)}_{[nt]-m}=y-y'-(n-m)x'\right)\\
\nonumber
&\hspace{0.3cm}-\widehat{\pr}\left(S^{(m)}_{[nt]-m}=x-x', A^{(m)}_{[nt]-m}=y-y'-(n-m)x',
\min_{k\leq n-m}S_k^{(m)}\leq-x'-a\right).
\end{align}
We can apply Proposition~\ref{P.5} to the first probabilty term on the right hand side of
\eqref{T6.2a}. As a result, uniformly in $x,x',y,y'>0$,
\begin{align}
\label{T6.2b}
\nonumber
&n^2\widehat{\pr}\left(S^{(m)}_{[nt]-m}=x-x', A^{(m)}_{[nt]-m}=y-y'-(n-m)x'\right)\\
&\hspace{1cm}
-f_t\left(\frac{x-x'-n\psi(t)}{\sqrt{n}},\frac{y-y'-n^2\int_0^t\psi(s)ds}{n^{3/2}}\right)\to0.
\end{align}
Furthermore, it follows easily from the definition of the measure $\widehat{\pr}$ that
$\widehat{\mathbf{E}}X_j\sim \frac{\varphi'(\lambda)}{\varphi(\lambda)}$ for each $j\le m$.
Therefore, $\widehat{\mathbf{E}}S_m\sim \frac{\varphi'(\lambda)}{\varphi(\lambda)}\log^2 n$
and $\widehat{\mathbf{E}}A_m\sim \frac{\varphi'(\lambda)}{2\varphi(\lambda)}\log^4 n$. From
these relations we infer that
\begin{align*}
&f_t\left(\frac{x-x'-n\psi(t)}{\sqrt{n}},\frac{y-y'-n^2\int_0^t\psi(s)ds}{n^{3/2}}\right)\\
&\hspace{2cm}-
f_t\left(\frac{x-n\psi(t)}{\sqrt{n}},\frac{y-n^2\int_0^t\psi(s)ds}{n^{3/2}}\right)\to0
\end{align*}
uniformly in $x,y>0$ and $(x',y')\in D$. Combining this with \eqref{T6.2b}, we conclude that
\begin{align}
\label{T6.2c}
\nonumber
&n^2\widehat{\pr}\left(S^{(m)}_{[nt]-m}=x-x', A^{(m)}_{[nt]-m}=y-y'-(n-m)x'\right)\\
&\hspace{1cm}
-f_t\left(\frac{x-n\psi(t)}{\sqrt{n}},\frac{y-n^2\int_0^t\psi(s)ds}{n^{3/2}}\right)\to0
\end{align}
uniformly in $x,y>0$ and $(x',y')\in D$.

Moreover, for every $(x',y')\in D$ and all $n$ sufficiently large we have
\begin{align*}
&\widehat{\pr}\left(S^{(m)}_{[nt]-m}=x-x', A^{(m)}_{[nt]-m}=y-y'-(n-m)x', \min_{k\leq n-m}S_k^{(m)}\leq-x'\right)\\
&\hspace{1cm}\leq\widehat{\pr}\left(\min_{k\leq [nt]-m}S_k^{(m)}\leq-x'\right)
\leq\widehat{\pr}\left(\min_{k\leq [nt]-m}S_k^{(m)}\leq-\log^{3/2}n\right).
\end{align*}
By the exponential Chebyshev inequality,
\begin{align}\label{T6.cheb.1}
\widehat{\pr}(S_k\leq-\log^{3/2}n)=\widehat{\pr}(-S_k\geq\log^{3/2}n)\leq e^{-\lambda h\log^{3/2}n}\widehat{\e}e^{-\lambda hS_k}.
\end{align}
Futhermore, it follows from the definition of $\widehat{\pr}$ that, for every $0<h<1-t$,
\begin{equation}\label{T6.cheb.2}
\begin{split}
\widehat{\e}e^{-\lambda hS_k}&=\prod_{j=1}^k\widehat{\e}e^{-\lambda h X_j}=\prod_{j=1}^k\frac{\varphi(u_{n,j}-\lambda h)}{\varphi(u_{n,j})}\\
&=\exp\left\lbrace-\lambda\sum_{j=1}^k\psi\left(\frac{j-1}{n}+h\right)+\lambda\sum_{j=1}^k\psi\left(\frac{j-1}{n}\right)\right\rbrace.
\end{split}
\end{equation}
Using here the Euler-Mclaurin summation formula \eqref{euler.3}, we infer that
\begin{align*}
\sum_{j=1}^k\psi\left(\frac{j-1}{n}\right)-\sum_{j=1}^k\psi\left(\frac{j-1}{k}+h\right)
\leq c+n\left(\int_0^{k/n}\psi(u)du-\int_h^{h+k/n}\psi(u)du\right).
\end{align*}
It is easy to see that the function $s\mapsto\int_0^s\psi(u)du-\int_h^{h+s}\psi(u)du$ achieves its maximum either at zero or at $t$.
Therefore,
\begin{align*}
\max_{s\in[0,t]}\left(\int_0^s\psi(u)du-\int_h^{s+h}\psi(u)du\right)=\left(\int_0^t\psi(u)du-\int_h^{t+h}\psi(u)du\right)^+.
\end{align*}
If $h$ is so small that $\psi(h)<\psi(t+h)$, then 
\begin{align*}
\int_0^t\psi(u)du-\int_h^{t+h}\psi(u)du<0
\end{align*}
and, consequently,
\begin{align*}
\max_{k\leq nt}\left(\sum_{j=1}^k\psi\left(\frac{j-1}{n}\right)-\sum_{j=1}^k\psi\left(\frac{j-1}{n}+h\right)\right)\leq c.
\end{align*}
Plugging this into \eqref{T6.cheb.2}, we obtain
\begin{align*}
\max_{k\leq nt}\widehat{\e}e^{-\lambda h S_k}\leq e^c.
\end{align*}
Combining this estimate and \eqref{T6.cheb.1} we finally get
\begin{align*}
\widehat{\pr}\left(\min_{k\leq nt}S_k\leq-\log^{3/2}n\right)
&\leq\sum_{j=1}^{nt}\widehat{\pr}\left(S_k<-\log^{3/2}n\right)\\
&\leq nte^ce^{-\lambda h\log^{3/2}n}=o\left(\frac{1}{n^2}\right).
\end{align*}
So we get, uniformly in $x,y>0$ and $(x',y')\in D$,
\begin{equation}
\label{T6.3}
n^2Q(x',y';x,y)-\frac{1}{n^2}f_t\left(\frac{x-n\psi(t)}{\sqrt{n}},\frac{y-n^2\int_0^t\psi(s)ds}{n^{3/2}}\right)\to0
\end{equation}
Combining \eqref{T6.3}, \eqref{T6.4a} and \eqref{T6.4b}, we conclude that
\begin{align}
\label{T6.10}
\nonumber
&n^2\sum_{D}\widehat{\pr}(S_m=x', A_m=y',\min_{k\leq m}S_k>-a)Q(x'y';x,y)\\
\nonumber
&=f_t\left(\frac{x-n\psi(t)}{\sqrt{n}},\frac{y-n^2\int_0^t\psi(s)ds}{n^{3/2}}\right)
\sum_{D}\widehat{\pr}(S_m=x', A_m=y',\min_{k\leq m}S_k>-a)+o(1)\\
&=f_t\left(\frac{x-n\psi(t)}{\sqrt{n}},\frac{y-n^2\int_0^t\psi(s)ds}{n^{3/2}}\right)
\widehat\pr\left(\min_{k\leq m}S_k>-a\right)+o(1).
\end{align}
For every fixed $m_0\ge1$ we have
\begin{align*}
\widehat{\pr}\left(\min_{k\leq m}S_k<-a\right)\le\widehat{\pr}\left(\min_{k\leq m_0}S_k<-a\right)
\end{align*}
and
\begin{align*}
\widehat{\pr}\left(\min_{k\leq m}S_k<-a\right)\ge\widehat{\pr}\left(\min_{k\leq m_0}S_k<-a\right)
-\widehat{\pr}\left(\min_{m_0< k\leq m}S_k<-a\right).
\end{align*}
For the second probability term on the right hand side we have
\begin{align*}
&\widehat{\pr}\left(\min_{m_0< k\leq m}S_k<-a\right)\\
&\hspace{1cm}\leq \widehat{\pr}\left(S_{m_0}<m_0^{2/3}\right)
+\widehat{\pr}\left(\min_{k\le m-m_0}S_k<-m_0^{2/3}\right)\\
&\hspace{1cm}\leq\widehat{\pr}\left(S_{m_0}<m_0^{2/3}\right)
+\sum_{k=1}^{m-m_0}\widehat{\pr}\left(S_k-\widehat{\e}[S_k]<-m_0^{2/3}-\widehat{\e}[S_k]\right).
\end{align*}
Using the exponential Chebyshev inequality once again, one can easily infer that there
exists $f(x)\to0$, $x\to\infty$ such that, for all $n\ge1$,
$$
\widehat{\pr}\left(\min_{m_0< k\leq m}S_k<-a\right)\le f(m_0).
$$
Consequently,
\begin{align*}
\widehat{\pr}\left(\min_{k\leq m_0}S_k>-a\right)-f(m_0)\le
\widehat{\pr}\left(\min_{k\leq m}S_k>-a\right)
\le \widehat{\pr}\left(\min_{k\leq m_0}S_k>-a\right).
\end{align*}

For every $j\le m_0$ the distribution of $X_j$ converges, as $n\to\infty$, to the distribution
of $X_1$ under $\widehat{\pr}$. (Here one has to notice that this distribution does not depend
on $n$.) Let $U_k$ denote a random walk with i.i.d. increments, which are distributed according
to the limiting distribution of $X_j$. Then
$$
\lim_{n\to\infty}\widehat{\pr}\left(\min_{k\leq m_0}S_k>-a\right)
=\mathbf{P}\left(\min_{k\leq m_0}U_k>-a\right).
$$
Letting now $m_0\to\infty$, we finally get
\begin{align*}
\lim_{n\rightarrow\infty}\widehat{\pr}\left(\min_{k\leq m}S_k>-a\right)
=\pr\left(\min_{k\geq1}U_k\geq -a\right)=:q(a).
\end{align*}
The positivity of the function $q$ follows from the fact that the increments of $U_k$ have positive mean.
Applying the previous relation to \eqref{T6.10} and taking into account \eqref{T6.2bb}, 
we obtain the desired asymptotics.
\end{proof}

In order to prove local limit theorems for $(S_n, A_n)$ conditioned on $\{tau>n,S_n=x\}$ with fixed $x$
we are going to consider the path $\{S_{[n/2]},S_{[n/2]+1},\ldots, S_n\}$ in the reversed time. More
precisely, we we shall consider random variables
$$
\widehat{X}_{k}=-X_{n-k+1}\quad\text{and}\quad
\widehat{S}_k=\widehat{X}_1+\widehat{X}_2+\ldots+\widehat{X}_k,\quad k=1,2,\ldots,n.
$$
\begin{proposition}
\label{prop.hat}
Assume that the conditions of Theorem~\ref{T1} are valid. Then there exists a positive increasing
$\widehat{q}$ such that, for every $t\in(0,1)$,
\begin{align}\label{T6.1.hat}
\nonumber
&n^2\widehat{\pr}\left(\widehat{S}_{[nt]}=x, \widehat{A}_{[nt]-1}=y,\min_{k\leq [nt]}S_k>-a\right)\\
&\hspace{2cm}
-\widehat{q}(a)\widehat{f}_t\left(\frac{x-n\psi(1-t)}{\sqrt{n}},\frac{y-n^2\int_{1-t}^1\psi(s)ds}{n^{3/2}}\right)\to0
\end{align}
uniformly in $x,y>0$. The function $\widehat{f}_t$ is the density function of the normal distribution with
zero mean and the covariance matrix
\begin{equation*}
\widehat{\Sigma}_t=\begin{pmatrix}
&\int_{1-t}^1\sigma^2(u)du&\int_{1-t}^1\sigma^2(u)(t-1+u)du\\
&\int_{1-t}^1\sigma^2(u)(t-1+u)du&\int_{1-t}^1\sigma^2(u)(t-1+u)^2du
\end{pmatrix}.
\end{equation*}
\end{proposition}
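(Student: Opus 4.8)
The plan is to derive \eqref{T6.1.hat} by transcribing, almost word for word, the proofs of Proposition~\ref{P.5} and Proposition~\ref{T6} with the forward walk $S$ replaced by the time-reversed walk $\widehat S$; what changes is only the bookkeeping of the centring constants and of the covariance matrix. (The minimum in \eqref{T6.1.hat} should be read as $\min_{k\le[nt]}\widehat S_k$: this is the form needed in the sequel, where conditioning the forward path to stay positive turns, after reversal from time $n$, into a lower barrier for $\widehat S$.) The starting observation is that under $\widehat\pr$ the array $\{\widehat X_k=-X_{n-k+1}\}_{k=1}^n$ is independent, each $\widehat X_k$ being obtained from $-X_1$ by an exponential change of measure with parameter $-\lambda k/n\in[-\lambda,0]$; hence $\widehat{\mathbf E}\widehat X_k=-\varphi'(\lambda k/n)/\varphi(\lambda k/n)=-\psi'(1-k/n)$ and $\widehat{\mathbf{Var}}\,\widehat X_k=\sigma^2(1-k/n)$, with uniformly bounded third moments for $k\in[\gamma n,(1-\gamma)n]$. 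Summing and invoking the Riemann-sum error bound behind \eqref{euler.1} (now applied to $\psi'(1-\cdot)$ and to its primitive $-\psi(1-\cdot)$) yields $\widehat{\mathbf E}\widehat S_{[nt]}=n\psi(1-t)+O(1)$ and $\widehat{\mathbf E}\widehat A_{[nt]-1}=n^2\int_{1-t}^1\psi(s)\,ds+O(n)$, so the centrings in \eqref{T6.1.hat} are the true means up to errors of order $o(\sqrt n)$, resp.\ $o(n^{3/2})$, which disappear in the limiting density.

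Next I would prove the unconditioned local limit theorem for $(\widehat S_{[nt]},\widehat A_{[nt]-1})$ under $\widehat\pr$, i.e.\ the analogue of Proposition~\ref{P.5}. Its proof goes through verbatim: the $\widehat X_k$ satisfy the Lindeberg condition on $[\gamma n,(1-\gamma)n]$, they are aperiodic (aperiodicity of $X_1$ passes to $-X_1$), and the parameters of the exponential changes of measure that produce them lie in the compact set $[-\lambda,0]$, so the four pieces $I_1,\dots,I_4$ there are controlled exactly as before. The limiting Gaussian process is now $\widehat\xi$ with independent increments and $\mathbf E\widehat\xi(t)^2=\int_0^t\sigma^2(1-s)\,ds=\int_{1-t}^1\sigma^2(u)\,du$; repeating the covariance computations of Section~\ref{sec:change_of_measure} with $\sigma^2(1-\cdot)$ in place of $\sigma^2(\cdot)$ and substituting $u=1-v$ turns $\Sigma_t$ into $\widehat\Sigma_t$. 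This gives $n^2\widehat\pr(\widehat S_{[nt]}=x,\widehat A_{[nt]-1}=y)\to\widehat f_t\big((x-n\psi(1-t))/\sqrt n,\ (y-n^2\int_{1-t}^1\psi(s)\,ds)/n^{3/2}\big)$ uniformly in $x,y$.

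The main step follows the proof of Proposition~\ref{T6} line by line. With $m=[\log^2 n]$, the Markov property for $\widehat S$ at time $m$ writes $\widehat\pr(\widehat S_{[nt]}=x,\widehat A_{[nt]-1}=y,\min_{k\le[nt]}\widehat S_k>-a)$ as a sum over $(x',y')$ of $\widehat\pr(\widehat S_m=x',\widehat A_m=y',\min_{k\le m}\widehat S_k>-a)$ times a post-$m$ kernel $\widehat Q$, in which the area argument is shifted by a multiple of $x'$ and the barrier is lowered to $-x'-a$. On the box $D=\{|x'-\widehat{\mathbf E}\widehat S_m|\le\log^{3/2}n,\ |y'-\widehat{\mathbf E}\widehat A_m|\le\log^{5/2}n\}$ the previous paragraph gives $n^2\widehat Q\to\widehat f_t$ uniformly (the shifts by $(x',y')$ and by the area correction being of smaller order), while $D^c$ contributes $o(n^{-2})$ by Chebyshev as in \eqref{T6.2bb}. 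The barrier is then removed at cost $o(n^{-2})$ through the exponential Chebyshev inequality as after \eqref{T6.cheb.1}: one computes $\widehat{\mathbf E}e^{-\lambda h\widehat S_k}=\exp\{\lambda\sum_{j=1}^k(\psi(1-j/n)-\psi(1-j/n-h))\}$, and since $\psi$ is concave with $\psi(1)=0$ the function $s\mapsto\int_{1-s}^1\psi-\int_{1-s-h}^{1-h}\psi$ is convex on $[0,t]$, vanishes at $s=0$ and is negative at $s=t$ for all small $h>0$ (because $\int_{1-h}^1\psi(s)\,ds\to0$ while $h^{-1}\int_{1-t-h}^{1-t}\psi(s)\,ds\to\psi(1-t)>0$), whence $\max_{k\le[nt]}\widehat{\mathbf E}e^{-\lambda h\widehat S_k}\le e^{c}$ and $\widehat\pr(\min_{k\le[nt]-m}\widehat S^{(m)}_k\le-\log^{3/2}n)=o(n^{-2})$. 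Summing over $D$ factorises the main term as $\widehat f_t(\cdots)\,\widehat\pr(\min_{k\le m}\widehat S_k>-a)+o(1)$, and, as in Proposition~\ref{T6}, $\widehat\pr(\min_{k\le m}\widehat S_k>-a)\to\mathbf P(\min_{k\ge1}\widehat U_k\ge-a)=:\widehat q(a)$, where $\widehat U$ has i.i.d.\ increments equal to the weak limit of $\widehat X_k$ as $n\to\infty$ with $k$ fixed, namely $-X_1$ under $\mathbf P$ (the parameter $\lambda k/n\to0$). Since $\mathbf E(-X_1)=\mu>0$, the walk $\widehat U$ has positive drift, so $\widehat q(a)>0$, and $\widehat q$ is non-decreasing in $a$.

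I expect the only genuinely delicate point to be the sign verification in the exponential-Chebyshev step: the reversed walk has drift $-\psi'(1-k/n)$, which may change sign inside $[0,tn]$ (at the point where $\varphi'$ vanishes), so it is not a priori clear that $\widehat S^{(m)}$ avoids the level $-\log^{3/2}n$ with the required probability. This is exactly what the convexity of $s\mapsto\int_{1-s}^1\psi-\int_{1-s-h}^{1-h}\psi$ delivers, and it is the mirror image — anchored at the right endpoint $u=1$, where $\psi(1)=0$ — of the inequality $\psi(h)<\psi(t+h)$ used in Proposition~\ref{T6}. Apart from this, and the careful tracking of the centrings $n\psi(1-t)$ and $n^2\int_{1-t}^1\psi$ and of the matrix $\widehat\Sigma_t$, the argument is a routine transcription of the two preceding proofs.
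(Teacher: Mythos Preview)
Your proposal is correct and takes exactly the approach the paper intends: the paper itself omits the proof, stating only that it ``repeats that of Propositions~\ref{P.5} and \ref{T6}'', and your write-up is precisely such a transcription with the necessary bookkeeping (the centrings $n\psi(1-t)$ and $n^2\int_{1-t}^1\psi$, the matrix $\widehat\Sigma_t$, and the identification of $\widehat q$ via the limiting walk with increment law $-X_1$ under $\mathbf P$). Your convexity verification for the exponential Chebyshev step --- the mirror image of the inequality $\psi(h)<\psi(t+h)$ in Proposition~\ref{T6}, now anchored at $u=1$ where $\psi(1)=0$ --- is exactly the detail one has to supply, and your observation that the minimum in the displayed statement should read $\min_{k\le[nt]}\widehat S_k$ is consistent with how the result is used in the subsequent proposition.
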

The proof of this proposition repeats that of Propositions~\ref{P.5} and \ref{T6} and we omit it.
We now state a local limit theorem for a bridge of $S_n$ conditioned to stay positive. This result
is the most important ingredient in our approach to the proof of Theorem~\ref{T1}.
\begin{proposition}
Assume that the conditions of Theorem~\ref{T1} are valid. Then, for every fixed $x$,
\begin{equation}\label{stick}
n^{2}\widehat{\pr}\left(A_n=y, S_n=x,\tau>n\right)-q(0)\widehat{q}(x)
f_1\left(0,\frac{y-n^2I}{n^{3/2}}\right)\rightarrow 0.
\end{equation}
\end{proposition}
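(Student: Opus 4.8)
The plan is to split the trajectory of length $n$ into two halves at time $[n/2]$ and use the Markov property together with the two "one-sided" local limit theorems already established --- Proposition~\ref{T6} for the forward piece $\{S_0,\ldots,S_{[n/2]}\}$ and Proposition~\ref{prop.hat} for the backward piece $\{\widehat S_0,\ldots,\widehat S_{n-[n/2]}\}$. Write $m=[n/2]$ and condition on the value of the walk at the junction:
\begin{align*}
\widehat{\pr}(A_n=y, S_n=x,\tau>n)
=\sum_{z>0}\widehat{\pr}\bigl(S_m=z, A_m=y_1, \min_{k\le m}S_k>0\bigr)\,
\widehat{\pr}\bigl(\text{backward piece from }z\text{ stays}>0,\ \text{ends at }x\bigr),
\end{align*}
where the area decomposes as $A_n=A_m+(n-m)S_m+\widehat A_{n-m}$ (the area of the second half, expressed through the reversed walk started from height $S_m$, picks up the deterministic shift $(n-m)S_m$). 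So the inner probability couples the forward endpoint $z$ to the backward problem, and summing over $z$ one is convolving the two Gaussian-type densities from Propositions~\ref{T6} and \ref{prop.hat}.

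First I would make the substitution $z=n\psi(1/2)+\sqrt n\,\zeta$ and the corresponding rescaling of the area variables, so that the sum over $z$ becomes a Riemann sum approximating an integral over $\zeta\in\mathbb R$. The key point is that $S_m$ concentrates at $n\psi(1/2)$ on scale $\sqrt n$, so only $z$ in a window of width $O(\sqrt n\log n)$ contributes; outside this window one uses the crude bound $\widehat{\pr}(S_m=z,\cdots)\le c/m^2$ from Proposition~\ref{P.5} (as in \eqref{T6.2aa}) times the exponential Chebyshev control on $\min_k S_k$ developed in the proof of Proposition~\ref{T6}, exactly as in \eqref{T6.2bb}, to show the tail of the sum is negligible after multiplication by $n^2$. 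On the main window, Proposition~\ref{T6} gives $n^2$ (well, $m^2$) times the forward probability $\approx q(0)f_{1/2}(\cdot,\cdot)$ and Proposition~\ref{prop.hat} gives the analogous statement with $\widehat q(x)\widehat f_{1/2}$ for the backward half (note $\min_{k\le n}S_k>0$ is shared by both halves but the overlap at the junction is handled by the Markov decomposition: $q(0)$ absorbs the forward positivity near the start, $\widehat q(x)$ the backward positivity near the endpoint $S_n=x$, and the constraint in the bulk is automatically encoded since each half already carries its own $\min>0$ event). The convolution identity for Gaussian densities then produces $f_1\bigl(0,(y-n^2I)/n^{3/2}\bigr)$ with the correct covariance: one checks that
\begin{align*}
\int_{\mathbb R} f_{1/2}\Bigl(\zeta,\frac{y_1-n^2\int_0^{1/2}\psi}{n^{3/2}}\Bigr)\,
\widehat f_{1/2}\Bigl(\text{shifted argument}\Bigr)\,d\zeta
= f_1\Bigl(0,\frac{y-n^2I}{n^{3/2}}\Bigr),
\end{align*}
which is just the statement that the Gaussian vector $(\xi(1),\int_0^1\xi)$ with the constraint $\xi(1)=0$ (built into $\tau>n$, $S_n=x$ with $x$ fixed forcing the first coordinate to be $O(1/\sqrt n)=o(1)$ on the rescaled axis) has the density $f_1(0,\cdot)$, and the matrices $\Sigma_{1/2}$, $\widehat\Sigma_{1/2}$ add up appropriately under the shift $(t-m)z$ reflecting the $\int_0^1\psi(s)(1-s)\,ds$-type weighting.

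I expect the main obstacle to be the bookkeeping of the area shift: the term $(n-m)S_m$ is of order $n\cdot n=n^2$, i.e. of the same order as $n^2 I$ itself, so when we write $z=n\psi(1/2)+\sqrt n\,\zeta$ the fluctuation $(n-m)\sqrt n\,\zeta\sim n^{3/2}\zeta$ lands precisely on the natural scale $n^{3/2}$ of the area fluctuations and must be tracked exactly --- it is what couples the two Gaussians and generates the off-diagonal entries of the limiting covariance. One must verify that the deterministic parts match, i.e. that $n^2\int_0^{1/2}\psi + (n-m)\,n\psi(1/2) + n^2\int_{1/2}^1\psi(s)\,ds$ (after the reversal, $\int_{1-t}^1\psi$ with $t=1/2$) plus the Euler--Maclaurin $O(1)$ corrections reassemble into $n^2 I$ up to $o(n^{3/2})$; this is the same Riemann-sum estimate as Lemma~\ref{lem:euler} and \eqref{euler.1}, but one has to be careful that the accumulated error stays $o(n^{3/2})$ rather than merely $o(n^2)$. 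Finally, that $x$ is held fixed means the first argument of $f_1$ is $(x-n\psi(1))/\sqrt n=x/\sqrt n\to0$ (since $\psi(1)=0$), which is why $0$ appears there; this needs the uniform continuity of $f_1$ in its first argument, which is immediate from the explicit Gaussian form \eqref{t-density}.
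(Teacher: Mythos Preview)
Your approach is the paper's: split at $\ell(n)=[nt]$, apply Propositions~\ref{T6} and \ref{prop.hat} to the two halves, and identify the resulting Gaussian convolution. Two points are worth correcting or comparing.

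First, your area decomposition is off. The reversed walk measures heights relative to $S_n$, not $S_m$: from $S_k=S_n+\widehat S_{n-k}$ one gets
\[
\sum_{k=m+1}^n S_k=(n-m)x+\widehat A_{n-m-1},
\]
so the shift is $(n-m)x$ with $x$ \emph{fixed}, not $(n-m)S_m$. This actually makes life easier: the shift is $O(n)=o(n^{3/2})$ and disappears after rescaling. The coupling between the two halves therefore comes \emph{only} through the common first argument $u=(x'-n\psi(t))/\sqrt n$ in $f_t$ and $\widehat f_{1-t}$, not from any $n^{3/2}$-scale term in the shift. Your worry about tracking $(n-m)\sqrt n\,\zeta$ exactly is thus a red herring, and the deterministic centerings match trivially: $n^2\int_0^t\psi+n^2\int_t^1\psi=n^2I$.

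Second, the paper identifies the convolution
\[
\int_{\mathbb R^2} f_t(u,v)\,\widehat f_{1-t}\!\left(u,\;z-v\right)du\,dv
\]
not by a direct Gaussian calculation at $t=1/2$ but by a trick: keep $t\in(0,1)$ general, observe that the left-hand side of \eqref{stick} does not depend on $t$, hence neither does the convolution, and then let $t\to1$. In that limit $\widehat\Sigma_{1-t}\to0$, so $\widehat f_{1-t}$ degenerates to a point mass at the origin and the convolution collapses to $f_1(0,z)$. Your direct verification at $t=1/2$ would also work (it is a straightforward, if tedious, check that $\Sigma_t$ and $\widehat\Sigma_{1-t}$ combine correctly), but the $t\to1$ device is shorter.
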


\begin{proof}
It is immediate from the definition of $\widehat{S}_k$ that 
$S_k=S_n-\sum_{j=k+1}^nX_j=S_n+\widehat{S}_{n-k}$. Therefore, for $\ell(n)=[n t]$ with some fixed
$t\in(0,1)$ we have
\begin{align*}
\lbrace A_n=y, S_n=x\rbrace&=\left\lbrace A_{l(n)}+\sum_{k=l(n)+1}^nS_k=y, S_{l(n)}-\widehat{S}_{n-l(n)}=x\right\rbrace\\
&=\left\lbrace A_{l(n)}+(n-l(n))x+\sum_{l(n)+1}^n\widehat{S}_{n-k}=y, S_{l(n)}-\widehat{S}_{n-l(n)}=x\right\rbrace\\
&=\left\lbrace A_{l(n)}+\widehat{A}_{n-l(n)-1}=y-(n-l(n))x, S_{l(n)}-\widehat{S}_{n-l(n)}=x\right\rbrace.
\end{align*}
Consequently,
\begin{align*}
\widehat{\pr}\lbrace A_n&=y, S_n=x,\tau>n\rbrace\\
&=\widehat{\pr}\left\lbrace A_{l(n)}+\widehat{A}_{n-l(n)-1}=y-(n-l(n))x, S_{l(n)}-\widehat{S}_{n-l(n)}=x,\tau>n\right\rbrace\\
&=\sum_{x',y'}\widehat{\pr}(A_{l(n)}=y',S_{l(n)}=x',\tau>l(n))\widehat{Q}(x',y';x,y),
\end{align*}
where
\begin{align*}
&\widehat{Q}(x',y';x,y)\\
&\hspace{0.5cm}:=
\widehat{\pr}\left(\widehat{A}_{n-l(n)-1}=y-y'-(n-l(n))x, \widehat{S}_{n-l(n)}=x'-x,\min_{k\le n-\ell(n)}\widehat{S}_k>-x\right).
\end{align*}
Combining Propositions \ref{T6} and \ref{prop.hat}, we conclude that, for every fixed $x$,
\begin{align*}
n^{2}\widehat{\pr}\left(A_n=y, S_n=x,\tau>n\right)
\sim q(0)\widehat{q}(x)n^2\Sigma_n(y),
\end{align*}
where
\begin{align*}
\Sigma_n(y):=
\sum_{x',y'}f_{t}&\left(\frac{x'-n\psi(1/2)}{\sqrt{n}},\frac{y'-n^2\int_0^{1/2}\psi(s)ds}{n^{3/2}}\right)\\
&\times\widehat{f}_{1-t}\left(\frac{x'-n\psi(1/2)}{\sqrt{n}},\frac{y-y'-n^2\int_{1/2}^1\psi(s)ds}{n^{3/2}}\right).
\end{align*}
It is immediate from the continuity and boundedness of functions $f_{t}$ and $\widehat{f}_{1-t}$ that
$$
n^2\Sigma_n(y)\sim \int_{\mathbb{R}^2}f_{t}(u,v)\widehat{f}_{1-t}\left(u,\frac{y-n^2I}{n^{3/2}}-v\right)dudv,\quad n\to\infty
$$
and, consequently,
$$
n^2 \widehat{\pr}\lbrace A_n=y, S_n=x,\tau>n\rbrace
\sim q(0)\widehat{q}(x)\int_{\mathbb{R}^2}f_{t}(u,v)\widehat{f}_{1-t}\left(u,\frac{y-n^2I}{n^{3/2}}-v\right)dudv.
$$
Since the left hand side does not depend on $t$, we infer that
the integral on the right hand side does not depend on $t$ as well. Letting $t\to1$ and using continuity of $f_t$,
we infer that
$$
\int_{\mathbb{R}^2}f_{t}(u,v)\widehat{f}_{1-t}\left(u,z-v\right)dudv= f_1(0,z).
$$
This completes the proof of the proposition. 
\end{proof}

\section{Proofs of tail asymptotics}

\subsection{Proof of Theorem \ref{T1}}
Using \eqref{change.3}, we obtain
\begin{align*}
\pr(A_n=x,&\tau=n+1)\\
&=\sum_{y=1}^\infty\pr(A_n=x, S_n=y,\tau=n+1)\\
&=\sum_{y=1}^\infty\pr(A_n=x,S_n=y,\tau>n)\pr(X_{n+1}\leq -y)\\
&=e^{-\lambda x/n}\prod_{j=1}^n\varphi(u_{n,j})\sum_{y=1}^\infty\widehat{\pr}(A_n=x, S_n=y,\tau>n)\pr(X_{n+1}\leq -y).
\end{align*}
It follows from Proposition \ref{prop.hat} that, for every fixed $M$,
\begin{align}\label{A}
\nonumber
&\sum_{y=1}^M\widehat{\pr}(A_n=x, S_n=y,\tau>n)\pr(X_{n+1}\leq-y)\\
&\hspace{1cm}=\frac{q(0)}{n^2}h\left(\frac{x-n^2I}{n^{3/2}}\right)\sum_{y=1}^M\widehat{q}(y)\pr(X_1\leq-y)+o\left(\frac{1}{n^2}\right).
\end{align}
Futhermore, applying Proposition \ref{P.5}, we have
\begin{align*}
&\sum_{y=M+1}^\infty\widehat{\pr}(A_n=x, S_n=y,\tau>n)\pr(X_{n+1}\leq -y)\\
&\hspace{1cm}\leq\sum_{y=M+1}^\infty\widehat{\pr}(A_n=x,S_n=y)\pr(X_{n+1}\leq-y)
\leq\frac{c}{n^2}\sum_{M+1}^\infty\pr(X_1\leq-y).
\end{align*}
Consequently, uniformly in $n$,
\begin{align}\label{B}
\lim_{M\rightarrow\infty}n^2\sum_{y=M+1}^\infty\widehat{\pr}(A_n=x,S_n=y,\tau>n)\pr(X_{n+1}\leq-y)=0.
\end{align}
Combining \eqref{A} and \eqref{B}, we conclude that
\begin{align*}
\sum_{y=1}^\infty&\widehat{\pr}(A_n=x, S_n=y;\tau>n)\pr(X_{n+1}\leq -y)\\
&\hspace{1cm}=\frac{1}{n^2}h\left(\frac{x-n^2I}{n^{3/2}}\right)
\sum_{y=1}^\infty\widehat{q}(y)\pr(X_1\leq-y)+o\left(\frac{1}{n^2}\right).
\end{align*}
According to Lemma \ref{lem:euler},
\begin{align*}
\prod_{j=1}^n\varphi(u_{n,j})=\exp\left\lbrace-\lambda I n\right\rbrace\left(1+O(n^{-1})\right).
\end{align*}
Therefore,
\begin{align}\label{C}
\pr(A_n=x,\tau=n+1)
=\frac{Q+o(1)}{n^2}\exp\left\lbrace-\frac{\lambda x}{n}-\lambda n I\right\rbrace
h\left(\frac{x-n^2I}{n^{3/2}}\right),
\end{align}
where
$$
Q:=q(0)\sum_{y=1}^\infty\widehat{q}(y)\pr(X_1\leq-y).
$$
In particular, there exists a constant $C$, such that
\begin{align}\label{D}
\pr(A_n,\tau=n+1)\leq\frac{C}{n^2}\exp\left\lbrace-\frac{\lambda x}{n}-\lambda nI\right\rbrace.
\end{align}
Recall the definitions of $n_-$ and $n_+$. Changing the summation index and splitting the series
into two parts, we get
\begin{align}\label{E}
\nonumber
&\sum_{n=n_+}^\infty\pr(A_n=x,\tau=n+1)
=\sum_{k=0}^\infty\pr\left(A_{n_++k}=x, \tau=n_++k+1\right)\\
\nonumber
&\hspace{1cm}=\sum_{k\leq Mn_+^{1/2}}\pr(A_{n_++k}=x,\tau=n_++k+1)\\
&\hspace{3cm}+\sum_{k>Mn_+^{1/2}}\pr(A_{n_++k}=x,\tau=n_++k+1).
\end{align}
Applying \eqref{C} to the summands in the first sum, we get
\begin{align*}
&\sum_{k\leq Mn_+^{1/2}}\pr(A_{n_++k}=x,\tau=n_++k+1)\\
&=\frac{Q}{n_+^2}\sum_{k\leq Mn_+^{1/2}}\exp\left\lbrace
-\frac{\lambda x}{n_++k}-\lambda I(n_++k)\right\rbrace 
h\left(\frac{x-(n_++k)^2I}{(n_++k)^{3/2}}\right)+o\left(n_+^{-3/4}\right).
\end{align*}
Since $x-n^2_+I+k^2I=o(n^{3/2})$ uniformly in $k\leq Mn_+^{1/2}$,
\begin{align}\label{E1}
h\left(\frac{x-(n_++k)^2I}{(n_++k)^{3/2}}\right)\sim h\left(-\frac{2Ik}{n_+^{1/2}}\right).
\end{align}
Futhermore,
\begin{align*}
\frac{\lambda x}{n_++k}+\lambda I(n_++k)
&=\frac{\lambda x}{n_+}\left(1-\frac{k}{n_+}+\frac{k^2}{n_+^2}
+O\left(\frac{k^3}{n^3_+}\right)\right)+\lambda In_++\lambda Ik\\
&=\left(\frac{\lambda x}{n_+}+\lambda I n_+\right)+\lambda Ik-\frac{\lambda x}{n_+^2}k
+\frac{\lambda x k^2}{n_+^3}+O\left(\frac{\lambda x}{n_+^{5/2}}\right).
\end{align*}
Recalling now that $n_+=\sqrt{\frac{x}{I}}+\varepsilon_x$ with $\varepsilon_x\in(0,1]$, we have, 
uniformly in $k\leq Mn_+^{1/2}$,
\begin{align*}
0\leq\lambda Ik-\frac{\lambda x}{n^2_+}k
&=\left(I-\frac{x}{\frac{x}{I}\left(1+\varepsilon_x\sqrt{\frac{I}{x}}\right)^2}\right)\lambda k\\
&\leq 2\lambda k\varepsilon_x\sqrt{\frac{I}{x}}
=O\left(x^{-1/4}\right)=O\left(\frac{1}{n_+^{1/2}}\right).
\end{align*}
Consequently,
\begin{align}\label{F}
\nonumber
&\sum_{k\leq Mn_+^{1/2}}\pr(A_{n_++k}=x,\tau=n_++k+1)\\
\nonumber
&\hspace{0.5cm}=\frac{Q}{n^2_+}\exp\left\lbrace-\frac{\lambda x}{n_+}-\lambda In_+\right\rbrace
\left[\sum_{k\leq Mn_+^{1/2}}\exp\left\lbrace-\lambda I\frac{k2}{n_+}\right\rbrace 
h\left(-\frac{2IK}{n_+^{1/2}}\right)+o\left(n_+^{3/2}\right)\right]\\
\nonumber
&\hspace{0.5cm}=\frac{Q}{n^{3/2}_+}\exp\left\lbrace-\frac{\lambda x}{n_+}-\lambda In_+\right\rbrace
\left[\int_0^M e^{-\lambda Iu^2}h( -2Iu)du+o\left(n_+^{-3/2}\right)\right]\\
&\hspace{0.5cm}=\frac{\widehat{Q}}{x^{3/4}}\exp\left\lbrace-2\lambda\sqrt{Ix}\right\rbrace
\left[\int_0^Me^{-\lambda Iu^2}h(-2Iu)du+o(1)\right].
\end{align}
We split the second sum in \eqref{E} into two parts: $k\leq n_+$ and $k>n_+$.
Using \eqref{D}, we get
\begin{align*}
&\sum_{k\in(Mn_+^{1/2},n_+]}\pr(A_{n_++k}=x,\tau=n_++k+1)\\
&\hspace{1cm}\leq\frac{C}{n^2_+}\sum_{k\in(Mn_+^{1/2},n_+]}
\exp\left\lbrace-\frac{\lambda x}{n_++k}-\lambda I(n_++k)\right\rbrace.
\end{align*}
Using now \eqref{secsumm.1}, we get
\begin{align}\label{G}
\nonumber
\sum_{k\in(Mn_+^{1/2},n_+]}\pr&(A_{n_++k}=x,\tau=n_++k+1)\\
\nonumber
&\leq\frac{C}{n_+^2}\exp\left\lbrace-\frac{\lambda x}{n_+}-n_+\lambda I\right\rbrace
\sum_{k\in(Mn_+^{1/2},n_+]}e^{-\frac{\lambda I}{2}\frac{k^2}{n_+}}\\
&\leq\frac{\widehat{C}}{n_+^{3/2}}\exp\left\lbrace-\frac{\lambda x}{n_+}-n_+\lambda I\right\rbrace
\int_M^\infty e^{-\frac{\lambda Iu^2}{2}}du.
\end{align}
For $k>n_+$ we have by \eqref{euler.6} and \eqref{kfromnplustoinfty}
\begin{align}\label{H}
\nonumber
\sum_{k>n_+}\pr&(A_{n_++k}=x,\tau=n_++k+1)\\
\nonumber
&\leq\sum_{k>n_+}\exp\left\lbrace-\frac{\lambda x}{n_++k}-\lambda I(n_++k)\right\rbrace\\
&\leq C\exp\left\lbrace-\frac{\lambda x}{n_+}-\lambda In_+\right\rbrace\exp\left\lbrace-\frac{n_+\lambda I}{2}\right\rbrace.
\end{align}
Combining \eqref{F}, \eqref{G}, \eqref{H} and letting $M\rightarrow\infty$, we conclude that, for some $C_+>0$,
\begin{align*}
\sum_{n=n_+}^\infty\pr(A_n,\tau=n+1)\sim\frac{C_+}{x^{3/4}}\exp\left\lbrace-2\lambda\sqrt{Ix}\right\rbrace.
\end{align*}
Similar arguments lead to
\begin{align*}
\sum_{n=1}^{n_-}\pr(A_n=x,\tau=n+1)\sim\frac{C_-}{x^{3/4}}\exp\left\lbrace-2\lambda\sqrt{Ix}\right\rbrace.
\end{align*}
Thus the proof of Theorem \ref{T1} is complete.

\subsection{Proof of Theorem \ref{T3}}
For $k\geq 0$ we have
\begin{align*}
\pr(\tau=n_++k+1\vert A_\tau=x)=\frac{\pr(A_{n_++k}=x,\tau=n_++k+1)}{\pr(A_\tau=x)}
\end{align*}
It follows from \eqref{C} that
\begin{align*}
\pr(A_{n_++k}=x,\tau=n_++k+1)&=\frac{Q}{(n_++k)^2}\exp\left\lbrace-\frac{\lambda x}{n_++k}-\lambda I(n_++k)\right\rbrace\\
&\times\left[f_1\left(0,\frac{x-(n_++k)^2I}{(n_++k)^{3/2}}\right)+o(1)\right].
\end{align*}
It is immediate from the definition of $h$ that
$$
\varepsilon_M:=\max_{u\geq M}f_1(0,u)
\to0\quad\text{as }M\to\infty.
$$
Therefore, for all $x$ large enough and all $k\geq Mn_+^{1/2}$,
$$
\pr(\tau=n_++k+1\vert A_\tau=x)\leq C\varepsilon_M.
$$
For $k<Mn_+^{1/2}$ we have from \eqref{E1}
\begin{align*}
\pr&(A_{n_++k}=x,\tau=n_++k+1)\\
&\sim\frac{Q}{n_+^2}\exp\left\lbrace-\frac{\lambda x}{n_+}-\lambda In_+\right\rbrace 
\exp\left\lbrace-\lambda I\frac{k^2}{n_+}\right\rbrace f_1\left(0,-2I\frac{k}{n_+^{1/2}}\right).
\end{align*}
It follows now from Theorem \ref{T1} that
$$\pr(\tau=n_++k+1\vert A_\tau=x)\sim Cx^{1/4}\exp\left\lbrace -\lambda I\frac{k^2}{n_+}\right\rbrace 
f_1\left(0,-2I\frac{k}{n_+^{1/2}}\right).$$
Recalling that 
$$
f_1(0,z)=c\exp \left\{-\frac{z^2}{2\int_0^1\sigma^2(u)(1-u)^2 du}\right\}
$$
we get the desired asymptotics for $k\ge0$. The case $k<0$ can be treated in the same manner.

\subsection{Proof of \eqref{zero-mean}}
Fix some $\varepsilon>0$. Then
\begin{align}
\label{zero-mean.1}
\mathbf{P}(A_\tau>x)=\mathbf{P}(A_\tau>x,\tau\le \varepsilon x^{2/3})
+\sum_{n\ge\varepsilon x^{2/3}}\mathbf{P}(A_\tau>x,\tau=n+1).
\end{align}
It is easy to see that $\{A_\tau>x,\tau\le \varepsilon x^{2/3}\}\subset \{M_\tau>x^{1/3}/\varepsilon\}$.
Doney has shown in \cite{Doney_1983} that $y\mathbf{P}(M_\tau>y)\to c\in(0,\infty)$. Therefore, there 
exists a constant $C$ such that
\begin{align}
\label{zero-mean.2}
x^{1/3}\mathbf{P}(A_\tau>x,\tau\le \varepsilon x^{2/3})\le C\varepsilon\quad\text{for all }x>0.
\end{align}
By the functional limit theorem for random walk excursions (see Caravenna and Chaumont \cite{CC12} and
Sohier \cite{Soh10}),
$$
\mathbf{P}(A_\tau>x|\tau=n+1)=\overline{G}\left(\frac{x}{\sigma n^{3/2}}\right)+o(1),
$$
where
$$
\overline{G}(y):=\mathbf{P}\left(\int_0^1 e(t)dt>y\right).
$$
Furthermore, according to Theorem 8 in Vatutin and Wachtel \cite{VW09},
$$
\mathbf{P}(\tau=n+1)\sim\frac{C_0}{n^{3/2}}.
$$
Combining these two relations, we obtain
\begin{align*}
\mathbf{P}(A_\tau>x,\tau=n+1)=\frac{C_0}{n^{3/2}}\overline{G}\left(\frac{x}{\sigma n^{3/2}}\right)
+o(n^{-3/2}).
\end{align*}
and, consequently,
\begin{align*}
\sum_{n\ge\varepsilon x^{2/3}}\mathbf{P}(A_\tau>x,\tau=n+1)
=C_0\sum_{n\ge\varepsilon x^{2/3}}n^{-3/2}\overline{G}\left(\frac{x}{\sigma n^{3/2}}\right)
+o(x^{-1/3}).
\end{align*}
Since the sum on the right hand side can be written as a Riemannian sum for the function
$y^{-3/2}\overline{G}(y^{-3/2})$, we have
\begin{align}
\label{zero-mean.3}
\nonumber
\sum_{n\ge\varepsilon x^{2/3}}\mathbf{P}(A_\tau>x,\tau=n+1)
&=\frac{C_0\sigma^{1/3}}{x^{1/3}}\int_{\varepsilon\sigma^{2/3}}^\infty y^{-3/2}\overline{G}(y^{-3/2})dy
+o(x^{-1/3})\\
&=\frac{2C_0\sigma^{1/3}}{3x^{1/3}}\int_0^{1/(\varepsilon\sigma)} z^{-2/3}\overline{G}(z)dz
+o(x^{-1/3}).
\end{align}
Combining \eqref{zero-mean.1}--\eqref{zero-mean.3}, we obtain
$$
\liminf_{x\to\infty} x^{1/3}\mathbf{P}(A_\tau>x)\ge
\frac{2C_0\sigma^{1/3}}{3}\int_0^{1/(\varepsilon\sigma)} z^{-2/3}\overline{G}(z)dz
$$
and
$$
\limsup_{x\to\infty} x^{1/3}\mathbf{P}(A_\tau>x)\le
\frac{2C_0\sigma^{1/3}}{3}\int_0^{1/(\varepsilon\sigma)} z^{-2/3}\overline{G}(z)dz+C\varepsilon.
$$
Letting now $\varepsilon\to0$, we arrive at the relation
$$
\lim_{x\to\infty} x^{1/3}\mathbf{P}(A_\tau>x)=
\frac{2C_0\sigma}{3}\int_0^\infty z^{-2/3}\overline{G}(z)dz
=2C_0\sigma^{1/3}\mathbf{E}\left(\int_0^1 e(t)dt\right)^{1/3}.
$$

\end{document}